\newcommand{\abs}[1]{\left\vert#1\right\vert}
\newcommand{\set}[1]{\left\{#1\right\}}
\newcommand{\ul}{\underline}
\newcommand{\uly}{\underline{Y}}
\newcommand{\ulz}{\underline{Z}}
\def\5n{\negthinspace \negthinspace \negthinspace \negthinspace \negthinspace }
\def\4n{\negthinspace \negthinspace \negthinspace \negthinspace }
\def\3n{\negthinspace \negthinspace \negthinspace }
\def\2n{\negthinspace \negthinspace }
\def\1n{\negthinspace }
   \def\cA{{\cal A}}
\def\dbE{\mathbb{E}}       \def\IE{\mathbb{E}}
\def\dbF{\mathbb{F}} \def\sF{\mathscr{F}}      
\def\dbG{\mathbb{G}}         
\def\dbH{\mathbb{H}}   \def\cH{{\cal H}}  
   \def\cJ{{\cal J}}  
   \def\cK{{\cal K}}
\def\dbN{\mathbb{N}}       \def\IN{\mathbb{N}}
       \def\IP{\mathbb{P}}
       \def\IQ{\mathbb{Q}}
\def\dbR{\mathbb{R}}       \def\IR{\mathbb{R}}
\def\Om{\Omega}
\def\ss{\smallskip}                
\def\ms{\medskip}                
\def\ds{\displaystyle}
\def\no{\noindent}        \def\q{\quad}                      
\def\ns{\noalign{\ss}}    \def\qq{\qquad}                    
    \def\hb{\hbox}                     
         \def\rf{\eqref}                    
  \def\deq{\triangleq}               
            \def\({\Big (}
\def\les{\leqslant}                  \def\){\Big )}
\def\ges{\geqslant}       \def\esssup{\mathop{\rm esssup}}   \def\[{\Big[}
           \def\]{\Big]}
                   \def\cd{\cdot}
                 \def\o{\omega}
    \def\t{\tau}     \def\f{\varphi}  \def\i{\infty}   
\def\bde{\begin{definition}\label}    \def\ede{\end{definition}}
\def\be{\begin{equation}}
\def\bel{\begin{equation}\label}      \def\ee{\end{equation}}
\def\bt{\begin{theorem}\label}        \def\et{\end{theorem}}
\def\bc{\begin{corollary}\label}      \def\ec{\end{corollary}}
\def\bl{\begin{lemma}\label}          \def\el{\end{lemma}}
\def\bp{\begin{proposition}\label}    \def\ep{\end{proposition}}
\def\bas{\begin{assumption}\label}    \def\eas{\end{assumption}}
\def\br{\begin{remark}\label}         \def\er{\end{remark}}
\def\bex{\begin{example}\label}       \def\ex{\end{example}}
\def\ba{\begin{array}}                \def\ea{\end{array}}
\def\ben{\begin{enumerate}}           \def\een{\end{enumerate}}
\newtheorem{theorem}{Theorem}[section]
\newtheorem{definition}[theorem]{Definition}
\newtheorem{proposition}[theorem]{Proposition}
\newtheorem{corollary}[theorem]{Corollary}
\newtheorem{lemma}[theorem]{Lemma}
\newtheorem{remark}[theorem]{Remark}
\newtheorem{assumption}[theorem]{Assumption}
\newtheorem{example}[theorem]{Example}
\begin{document}

\title{{\bf Large Deviation Principle for Backward Stochastic Differential Equations with a stochastic Lipschitz condition on $z$ }
}

\author{Yufeng Shi\thanks{Institute for Financial Studies and School of Mathematics, Shandong University, Jinan 250100, Shandong, China (Email: {\tt yfshi@sdu.edu.cn}).
This author is supported by National Key R\&D Program of China (Grant No. 2018YFA0703900), the National Natural Science Foundation of China (Grant Nos. 11871309 and 11371226)}~,~~~
Jiaqiang Wen\thanks{ Department of Mathematics, Southern University of Science and Technology, Shenzhen, Guangdong, 518055, China (Email: {\tt wenjq@sustech.edu.cn}).
This author is supported by National Natural Science Foundation of China (Grant No. 12101291) and SUSTech start-up fund (Grant No. Y01286233).}~,~~~
Zhi Yang\thanks{Corresponding author. Institute for Financial Studies, Shandong University, Jinan 250100, Shandong, China (Email: {\tt yzyyss85@163.com}).
This author is supported by the State Scholarship Fund from the China Scholarship Council (No. 201906220089). The main work was carried out during his visits to Canada in 2020 and to Shenzhen in 2021. The warm hospitality of Carleton University and SUSTech is gratefully acknowledged.}}

%%\date{}
\maketitle

%--------------------------------------------------------------------------------------------------------

\no\bf Abstract. \rm
In this paper, a probabilistic interpretation for the viscosity solution of a parabolic partial differential equation is obtained by virtue of the solution of a class of quadratic backward stochastic differential equations (BSDEs, for short). Furthermore, we prove the convergence and the large deviation principle for the solution of this class of quadratic BSDEs, which is associated with a family of Markov processes with the diffusion coefficients that tend to be zero.

\ms

\no\bf Key words: \rm
quadratic BSDEs, parabolic PDEs, Feynman-Kac formula, Malliavin calculus, large deviation principle.

\ms

\no\bf AMS subject classifications: \rm 60H10; 60F10

\section{Introduction}

The following general {\it backward stochastic differential equation} (BSDE, for short) on a finite horizon $[0,T]$:
\bel{21.10.1} Y_t=\xi+\int_t^Tf(s,Y_s,Z_s)ds-\int_t^TZ_sdW_s,\ee
was first introduced by Pardoux and Peng \cite{ref5} in 1990, where the random variable $\xi$ is called the terminal value and the process $f$ is called the generator. A little earlier, the linear BSDE was studied by Bismut \cite{ref7}, where the author treated the linear BSDE as an adjoint process of state equation in the stochastic optimal control problem. Since then, the interest in BSDE has increased regularly, due to its important applications in mathematical finance, stochastic optimal control, and partial differential equations.
We refer the readers to Duffie and Epstein \cite{ref10}, El Karoui, Peng, and Quenez \cite{ref8}, El Karoui, Hamadne, and Matoussi \cite{ref6}, Zhang \cite{ref26}, and the references cited therein.
In order to broaden the applications of BSDEs, many efforts have been made to relax the assumptions on the {\it generator} $f$ of the BSDE \rf{21.10.1} for the existence and/or uniqueness of adapted solutions.
For example, Kobylanski \cite{ref25} proved the existence, uniqueness, and a minimal (and maximal) solution of the adapted solution of BSDE under the condition that the terminal value $\xi$ is bounded and the generator $f$ is quadratic growth with respect to $z$.
For convenience, hereafter, by a {\it quadratic BSDE}, we mean that in BSDE \rf{21.10.1}, the map $z\mapsto f(t,y,z)$ grows superlinearly, but no more than quadratically. Recently,
Briand and Hu \cite{ref22,Briand-Hu-08} obtained the existence and uniqueness of quadratic BSDE with an unbounded terminal condition;
Briand and Elie \cite{ref23} used a smart construction to obtain the existence and uniqueness of quadratic BSDEs with or without delay and bounded terminal condition.

\ms

On the one hand, as we all know, the contribution of the Feynman-Kac formula is that it makes a connection between a stochastic differential equation and a linear parabolic (or elliptic) partial differential equation. By virtue of solutions of BSDEs, Pardoux and Peng \cite{ref4} gave the probabilistic interpretation for the viscosity solutions of PDEs as the generalized Feynman-Kac formula.
On the other hand, the large deviation principle (LDP, for short) characterizes the limiting behavior of probability measure in terms of rate function, which is a very active field in applied probability and is largely used in rare events simulation. Recently, there has been a growing literature on studying the applications of LDP in finance (see \cite{ref14} and \cite{ref13}).
As far as we know, Ma and Zajic \cite{ref17} first considered the large deviation principle for the adapted solutions to the coupled forward and backward stochastic differential equations (FBSDEs, for short). Meanwhile, using the probability methods, Rainero \cite{ref15} considered the small random perturbation for BSDEs and obtained Freidlin-Wentzell's large deviation result. Subsequently, Essaky \cite{ref16} investigated the large deviation for BSDEs with the subdifferential operators. Chen and Xiong \cite{ref18} established a large deviation principle of the Freidlin and Wentzell type under the corresponding nonlinear probability for diffusion processes with small diffusion coefficients.
Some other recent works concerning LDP for BSDEs can be found in \cite{ref32,ref27,ref1}, etc.

\ms

Inspired by the literature mentioned above, in this paper, we consider the large deviation principle for the solution of a class of quadratic BSDEs. We prove that the solution of quadratic BSDEs not only converges to the solution of an ordinary differential equation but also satisfies the large deviation principle (\autoref{BSDELDP}). Note that one can directly use the contraction principle to get the large deviation principle when dealing with FBSDEs.
However, the techniques that we used here to show the large deviation principle are different from the previous works. In fact, due to the lack of a prior estimate for the solution, the method we used here is based on an approximation technique that concerning the viscosity solutions of the associated PDEs.
In detail, we first use the approximation techniques (\autoref{BSDEfbijin}) to overcome the difficulty caused by the assumption that $f$ is only continuous in $z$ and then using Malliavin calculus (\autoref{BSDEbijin}), we get the regular properties of the solutions of the BSDEs with the approximation coefficients. Furthermore, under the additional condition (A5) (see Section 2 below), the uniform convergence of the solutions of the corresponding approximation PDEs in every compact subset of their own domains are obtained (\autoref{PDEzuixiaojielianxu}). Finally, the solution of the BSDE is shown to be the viscosity solutions of the corresponding PDE via the stability property of viscosity solutions  (\autoref{21.10.5}). The above mentioned methods are also useful for us to study the perturbed FBSDEs and get the large deviation results in Section 4, especially \autoref{BSDEshedongguji} and \autoref{PDEshedongzuixiaojielianxu} play a key role in the proof.

\ms

The paper is organized as follows. In Section 2, we introduce some notations and general assumptions that will be used in the sequel. In Section 3, we give some useful results on FBSDEs and establish the relationship between the solution of BSDE and the viscosity solutions of the corresponding semi-linear parabolic PDE. The large deviation principle for the solutions of the quadratic BSDEs is established in Section 4.

\section{Preliminaries}

Let $(\Omega, \mathscr{F}, \mathbb{F}, \mathbb{P})$ be a complete filtered probability space on which a $d$-dimensional standard Brownian motion $\{W_t ; 0 \leqslant t<\infty\}$ is defined, where $\mathbb{F}=\left\{\mathscr{F}_{t} ; 0 \leqslant t<\infty\right\}$ is the natural filtration of the Brownian motion $W$ with $\mathscr{F}_{0}$ containing $\mathcal{N}$, the class of all $\mathbb{P}$-null sets of $\mathscr{F}$.
Denote by $|\cd|$ and $\langle\cd,\cd\rangle$ the Euclidean norm and dotproduct, respectively, throughout the paper.
Moreover, denote that $\sF_{s}^{t}=\mathcal{N}$ if $0\les s\les t$ and
$$
\sF_{s}^{t}=\sigma\left\{  W_{r}-W_{t}:t\les r\les s\right\}
\vee\mathcal{N},\q~ \text{if }s>t.
$$
Let us introduce some notations and spaces that will be used below.
For $k\in\dbN$ and Euclidean spaces $\dbH$ and $\dbG$, denote by
$C^{k}_{b}(\dbH,\dbG)$ the set of functions of class $C^k$ from $\dbH$ to $\dbG$ whose partial
derivations of order less than or equal to $k$ are bounded.
For every $p>1$, define
$$\ba{ll}
\ns\ds L^p_{\sF_T}(\Om;\dbH)=\Big\{\xi:\Om\to\dbH\bigm|\xi\hb{ is $\sF_T$-measurable, }\|\xi\|_{L^p}\deq\big(\dbE|\xi|^p\big)^{1\over p}<\i\Big\},\\
\ns\ds L_{\sF_T}^\i(\Om;\dbH)=\Big\{\xi:\Om\to\dbH\bigm|\xi\hb{ is $\sF_T$-measurable, }
\|\xi\|_{\i}\triangleq\esssup_{\o\in\Om}|\xi(\omega)|<\i\Big\}.\ea$$
Let $T>0$ be a fixed terminal time, and for any $t\in[0,T)$, define
\begin{align*}
\ds L_\dbF^p(t,T;\dbH)\1n=&\ \1n\Big\{\f:[t,T]\1n\times\1n\Om\to\dbH\bigm|\f \hb{ is
$\dbF$-progressively measurable, }\\
\ns\ds&\qq
\|\f \|_{L_\dbF^p(t,T)}\deq\1n\(\dbE\int^T_t\1n|\f_s|^pds\)^{1\over p}\1n<\2n\i\Big\},\\
\ns\ds L_\dbF^\infty(t,T;\dbH)=&~
\Big\{\f:[t,T]\times\Om\to\dbH\bigm|\f \hb{ is $\dbF$-progressively measurable, }\\
\ns\ds&\qq
\|\f \|_{L_\dbF^\infty(t,T)}\deq\esssup_{(s,\o)\in[t,T]\times\Om}|\f_s(\o)|<\i\Big\},\\
\ns\ds S_\dbF^p(t,T;\dbH)=&~
\Big\{\f:[t,T]\times\Om\to\dbH\bigm|\f \hb{ is
$\dbF$-adapted, continuous, }\\
\ns\ds&\qq
\|\f \|_{S_\dbF^p(t,T)}\deq\Big\{\dbE\[\sup_{s\in[t,T]}|\f_s|^p\]\Big\}^{\frac{1}{p}}<\i\Big\},\\
\ns\ds S_\dbF^\infty(t,T;\dbH)=&~
\Big\{\f:[t,T]\times\Om\to\dbH\bigm|\f \hb{ is $\dbF$-progressively measurable, continuous }\\
\ns\ds&\qq
\|\f \|_{S_\dbF^\infty(t,T)}\deq\esssup_{(s,\o)\in[t,T]\times\Om}|\f_s(\o)|<\i\Big\},\\
\cH^p_{BMO}[t,T]=&~\Big\{\f \in L^p_\dbF(t,T;\dbH)\Bigm|\|\f\|_{BMO}\equiv\sup_{ t \leqslant \t  \leqslant T}\Big\|
\dbE_\t\[(\int_\t^T|\f_s|^2ds)^{\frac{p}{2}}\] \Big\|_\i^{1\over p}<\i\Big\}.
\end{align*}
where $\t$ is the $\dbF$-stopping time and $\dbE_\t$ is the conditional expectation given $\sF_{\t}$.
Denote by $C([0,T],\dbH)$ the set of continuous functions on the interval $[0,T]$ with values in $\dbH$ and consider in this space the following uniform norm
\begin{align*}\label{uniformmetric}
\rho_{[0,T]}(\phi)\deq\sup_{t\in[0,T]}|\phi(t)|.
\end{align*}
Denote by $H([t,T],\dbH)$ the space of elements come from $\phi\in C([t,T],\dbH)$ such that there exists a square integrable function $\dot\phi$ satisfying
$$\phi_s= \phi_t+\int_t^s \dot\phi_r d r,\q~0\les t\les s\les T.$$
In other word, $\phi$ is absolutely continuous.

\ms

Next, let us recall the notion of derivation on Wiener space.
The following definition comes from Pardoux and Peng \cite{ref4},
and for more information about Malliavin calculus we refer the readers to  Nualart \cite{ref28}.
Denote by $\mathbb{S}$ the set of random variable $\xi$ of the form:
\begin{equation*}
  \xi=\varphi(W(h^{1}),...,W(h^{n})),
\end{equation*}
where $\varphi \in C^{\infty}_{b}(\mathbb{R}^{n},\mathbb{R})$, $h^{i}\in L^{2}([0,T];\mathbb{R}^{d})$ and $W(h^{i})= \int_0^T h^{i}_t dW_t$ is the Wiener integral for $i=1,...,n$.
To such a random variable $\xi$, we associate a ``derivative process" $\{ D_t \xi;\ t\in[0,T] \}$ defined as
\begin{equation*}
 D_t \xi\deq\sum_{i=1}^{n}\partial_{x_i} \varphi \big(W(h^{1}),...,W(h^{n})\big)h^{i}_t, \q~ t\in[0,T].
\end{equation*}
For $\xi \in \mathbb{S}$, we define its $1,2$-norm by:
\begin{equation*}
  \| \xi \|_{1,2}^{2}\deq\mathbb{E}\left[|\xi|^{2} + \int_0^T |D_{t}\xi|^{2} dt\right].
\end{equation*}
It can be shown (see Nualart \cite{ref28}) that the operator $D$ has a closed extension to the space $\mathbb{D}^{1,2}$, the closure of $\mathbb{S}$
with respect to the norm $\| \cdot \|_{1,2}^{2}$. Note that if $\xi \in \mathbb{D}^{1,2}$ is $\sF_{s}^{t}$-measurable, then we have $D_r \xi = 0$ for $r \in [0,T]\backslash (t,s]$.

\ms

We consider the following decoupled FBSDEs:
\begin{equation}\label{SDE}
  X^{t,x}_s = x + \int^s_t b(r,X^{t,x}_r) dr + \int_t^s \sigma (r) dW_r, \q~    t \les s \les T,
\end{equation}
as a convention, if $0 \les s \les t, \ \  X^{t,x}_s = x.$
\begin{equation}\label{BSDE}
  Y^{t,x}_s = g(X^{t,x}_T) + \int^T_s f(r,Y_r^{t,x},Z_r^{t,x})dr - \int_s^T Z_r^{t,x} dW_r, \q~    t \les s \les T.
\end{equation}

\ms

For the coefficients $b:[0,T]\times\IR^m \to \IR^m$ and $\sigma:[0,T] \to \IR^{m\times d}$ of SDE \rf{SDE}, we give the following assumption.
\begin{itemize}
  \item [(A1)] The coefficients $b$ and $\sigma$ are Borel measurable and continuous bounded with respect to $t$ and $x$. Moreover, there is a positive constant $L$ such that for all $t\in[0,T]$, $x,x'\in\dbR^m$,
  \begin{align*}
  \ds | b(t,x)|+\sum_{i=1}^{d}|\sigma_{i}(t)| \les L, \q |b(t,x)-b(t, x')| \les L|x-x'|,
  \end{align*}
  where $\sigma_{i}$ denote the $i$th column of matrix $\sigma$.
\end{itemize}
A standard argument for SDE with Lipschitz condition implies  that, under the assumption (A1), the existence result, uniqueness result and some useful estimates to the solution of SDE \rf{SDE} hold. The following result comes from Theorem 2.1 of Kunita \cite{ref3}.

\begin{lemma}\label{SDEguji}
Under the assumption (A1), given $p\ges2$, for any $(t,x)\in[0,T]\times \IR^m$,
SDE \eqref{SDE} admits a unique strong solution $X^{t,x} \in S_\dbF^{p}(0,T;\dbR^m)$. Moreover, there exists a positive constant $C_p$ depending only on $p$, $T$ and $L$ such that for all
$t,t'\in[0,T]$, $x,x'\in\dbR^m$, we have
\begin{equation*}
  \IE\left[\sup_{s\in [0,T]} |X_s^{t,x}-X_s^{t',x'}|^p \right]
  \les C_p \left(|x-x'|^p + (1+|x|^p+|x'|^p)|t-t'|^\frac{p}{2}\right).
\end{equation*}
\end{lemma}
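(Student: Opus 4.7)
Existence, uniqueness and $S^{p}_\IF$-regularity of $X^{t,x}$ follow from the standard Picard iteration for Lipschitz SDEs: under (A1), $b$ is globally Lipschitz in $x$ with a bound uniform in $t$, while $\sigma$ is bounded and deterministic, so the Picard map is a contraction on $S^{p}_\IF(t,T;\IR^m)$; extending by the convention $X^{t,x}_s=x$ for $s\les t$ gives membership in $S^{p}_\IF(0,T;\IR^m)$ together with an a priori bound $\IE[\sup_{s\in[0,T]}|X^{t,x}_s|^p]\les C_p(1+|x|^p)$ via BDG and Gronwall. This part I would only sketch, as it is entirely routine.

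The stability estimate is the substantive part. Assume without loss of generality $t\les t'$, and split $[0,T]$ into the three regions $[0,t]$, $[t,t']$, $[t',T]$. On $[0,t]$ the difference $\Delta_s\deq X^{t,x}_s-X^{t',x'}_s$ is identically $x-x'$. On $[t,t']$ we have $\Delta_s=(X^{t,x}_s-x)+(x-x')$, and since $|b|\les L$ and $|\sigma|\les L$ by (A1), a direct application of Jensen's inequality to the drift and the Burkholder--Davis--Gundy inequality to $\int_t^s\sigma(r)dW_r$ yields
\begin{equation*}
\IE\Big[\sup_{s\in[t,t']}|X^{t,x}_s-x|^p\Big]\les C_p\big(L^p|t'-t|^p+L^p|t'-t|^{p/2}\big)\les C_p|t'-t|^{p/2},
\end{equation*}
so $\IE[\sup_{[t,t']}|\Delta_s|^p]\les C_p(|x-x'|^p+|t'-t|^{p/2})$. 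The factor $(1+|x|^p+|x'|^p)$ in the claimed bound is then available for free; it would be genuinely needed only under a linear-growth version of (A1).

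On $[t',T]$ I would exploit the crucial fact that $\sigma$ does \emph{not} depend on $X$: subtracting the two SDEs gives, for $s\ges t'$,
\begin{equation*}
\Delta_s=\Delta_{t'}+\int_{t'}^s\big[b(r,X^{t,x}_r)-b(r,X^{t',x'}_r)\big]dr,
\end{equation*}
so the stochastic integral cancels entirely and the difference is pathwise absolutely continuous on $[t',T]$. The Lipschitz bound on $b$ then gives $|\Delta_s|\les|\Delta_{t'}|+L\int_{t'}^s|\Delta_r|dr$, and Gronwall yields $\sup_{s\in[t',T]}|\Delta_s|\les e^{LT}|\Delta_{t'}|$ pathwise. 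Taking $p$-th moments and using the bound already obtained on $[t,t']$ to control $\IE|\Delta_{t'}|^p=\IE|X^{t,x}_{t'}-x'|^p\les C_p(|x-x'|^p+|t'-t|^{p/2})$ closes the loop.

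The only mild obstacle is bookkeeping across the three time regions and keeping the constants uniform in $(t,t',x,x')$; no technical difficulty arises because boundedness of $b,\sigma$ and the independence of $\sigma$ from the state together eliminate any growth in $|x|,|x'|$ (so that $1+|x|^p+|x'|^p$ is strictly more than needed) and remove the stochastic term from the comparison SDE on $[t',T]$. Combining the three regional estimates and relabeling the constant gives the stated inequality.
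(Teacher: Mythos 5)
Your proof is correct, but it takes a genuinely different route from the paper: the paper does not prove this lemma at all, it simply invokes Theorem 2.1 of Kunita's book on stochastic flows, which delivers existence, uniqueness and the joint $L^p$-continuity estimate in $(t,x)$ for general Lipschitz SDEs with state-dependent diffusion coefficients. Your argument is a self-contained elementary proof that exploits the special structure of \eqref{SDE} — bounded coefficients and a diffusion coefficient $\sigma(r)$ that is deterministic and independent of the state — so that after both processes have started the stochastic integrals cancel and the difference $\Delta_s$ satisfies a pathwise integral inequality, reducing the whole estimate to a pathwise Gronwall argument plus one BDG bound on the interval $[t,t']$. The three-region decomposition and the constants are handled correctly, and your observation that the factor $(1+|x|^p+|x'|^p)$ is superfluous here is accurate: it survives in the statement only because the cited general theorem is calibrated for coefficients with linear growth in $x$, whereas under (A1) your argument yields the strictly stronger bound $C_p\left(|x-x'|^p+|t-t'|^{p/2}\right)$. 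What the citation buys is generality and brevity; what your proof buys is transparency about exactly which features of (A1) are used and a sharper constant structure.
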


\ms

On the other hand, for the coefficients $g:\IR^m\to\IR$ and $f:[0,T]\times \IR\times \IR^d\to \IR$ of BSDE \rf{BSDE},
 we present the following assumptions.
\begin{itemize}
  \item [(A2)] The coefficients $g$ and $f$ are both Borel measurable and continuous with respect to their items, respectively.
  \item [(A3)] There exists a positive constant $L$ such that for all $t\in[0,T]$ $x,x'\in\dbR^m$ $y,y'\in\dbR$ and $z\in\dbR^d$,
   \begin{align*}
  \ds |g(x)|  \les&~ L, \q |f(t,y,z)|  \les L(1+|y|+|z|^2),\\
  \ns\ds |g(x)-g(x')| \les&~ L|x-x'|,\q |f(t,y,z)-f(t,y',z)|  \les L|y-y'|.
   \end{align*}
  \item [(A4)] There is a positive constant $L_z$ such that for all $t\in[0,T]$, $y\in\dbR$ and $z,z'\in\dbR^d$,
      $$|f(t,y,z)-f(t,y,z')| \les L_z(1+|z|+|z'|)|z-z'|.$$
  \item [(A5)] There is a bounded subset $\mathcal{C}^0 \subseteq \IR \times \IR^d$ and a positive progressively measurable stochastic process $L_t \in \cH^2_{BMO}[0,T]$ such that $d\IP \otimes dt$-a.e., for all $(y,z) \in \mathcal{C}^0$ and $(y,z')\in \mathcal{C}^0$,
      $$  |f(t,y,z)-f(t,y,z')| \les L_t|z-z'|.$$
\end{itemize}
\begin{remark}
The bound of the subset $\mathcal{C}^0$ depends on the constants $L$ and $T$, and Assumption (A5) is weaker than the stochastic Lipschitz condition that for all $(y,z) $ and $(y,z')\in \IR \times \IR^d$ (see Section 2 of Dos Reis \cite{ref91}).
%weaker than the classical Lipschitz condition on $z$, since $L_t \in \cH^2_{BMO}[0,T]$ does not imply that $L_t $ is a bounded process (see Theorem 2.13 in Kazamaki \cite{ref41}).
%
\end{remark}

\ms

Throughout our paper, to avoid the additional Malliavin regularization technicalities (see Cheridito and Nam \cite{ref35} for more detail), we only consider the deterministic generator $f$ (and which does not depend on $x$) case.

\ms

From Kobylanski \cite{ref25} and Chapter 7 of Zhang \cite{ref26}, we see that under the assumptions
(A2)-(A4), BSDE \rf{BSDE} with quadratic growth in $Z$ has a unique adapted solution. The assumption (A5) will be used to get the large deviation principle and the stability of the viscosity solution of related PDE later.

\section{BSDEs and semi-linear parabolic PDEs}

In this section, we first study some properties of BSDE \rf{BSDE} with quadratic growth and with Malliavin calculus. Then, the relationship between the solutions of BSDE \rf{BSDE} and the viscosity solutions of the corresponding semi-linear parabolic PDEs is established.

\subsection{Regularities of solution}

%\tb{Now, we give some propositions in the literature of quadratic BSDEs.}

\begin{proposition}\label{BSDEbounded}
Under Assumptions (A1)-(A4), for any $(t,x) \in[0,T)\times \IR^m$, the unique adapted solution
$X^{t,x} \in S_\dbF^2(0,T;\dbR^m)$ of SDE \rf{SDE} and $(Y^{t,x} ,Z^{t,x} )\in S_\dbF^\i(0,T;\dbR) \times \cH^2_{BMO}[0,T]$ of BSDE \rf{BSDE} have the following properties: The Malliavin derivative of $X^{t,x} $ exists and bounded, i.e., $DX^{t,x}  $ is bounded; the solution $Z^{t,x} $ is bounded, i.e., $Z^{t,x} \in L_\dbF^\infty(0,T;\dbR^d)$.
\end{proposition}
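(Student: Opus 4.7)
My plan is to establish the four claims in the order they appear, reserving the pointwise boundedness of $Z^{t,x}$ for last since it is by far the most delicate. The forward statement is immediate: Assumption (A1) supplies exactly the boundedness and Lipschitz hypotheses required by \autoref{SDEguji}, which applied with $p=2$ yields the unique solution $X^{t,x}\in S_\dbF^2(0,T;\dbR^m)$ together with the stated moment bound. Malliavin differentiability of $X^{t,x}$ and boundedness of $DX^{t,x}$ then follow from classical theory (Nualart \cite{ref28}): for $r\in[t,s]$ the derivative $D_r X^{t,x}_s$ solves the linear matrix SDE
\[
D_r X^{t,x}_s = \sigma(r) + \int_r^s \nabla_x b\big(u,X^{t,x}_u\big)\,D_r X^{t,x}_u\,du,
\]
while $D_r X^{t,x}_s=0$ outside that range. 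Since $|\sigma|$ and $|\nabla_x b|$ are both dominated by $L$ thanks to (A1), a direct application of Gronwall's lemma yields the pointwise bound $|D_r X^{t,x}_s|\les Le^{LT}$, independent of $r$, $s$ and $\omega$.

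For the backward equation, the boundedness of $g$ and the quadratic-growth plus $y$-Lipschitz structure in (A3) place us squarely within Kobylanski's \cite{ref25} framework, yielding a unique adapted pair $(Y^{t,x},Z^{t,x})$ with $\|Y^{t,x}\|_{S_\dbF^\infty}\les C(L,T)$. To upgrade $Z^{t,x}$ to $\cH^2_{BMO}[0,T]$, I would apply It\^o's formula to $e^{2\gamma Y^{t,x}_s}$ between an arbitrary $\dbF$-stopping time $\tau$ and $T$, choosing $\gamma$ large enough that after absorbing the $|f|\les L(1+|Y|+|Z|^2)$ estimate a positive multiple of $e^{2\gamma Y}|Z|^2$ survives in the drift. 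Taking $\dbE_\tau$ kills the stochastic integral, the terminal term is controlled by the $L^\infty$ bound on $g$ and $Y$, and what remains is precisely a uniform bound on $\|\dbE_\tau\!\int_\tau^T|Z_s|^2\,ds\|_\infty$, giving $Z^{t,x}\in\cH^2_{BMO}[0,T]$ with a norm depending only on $L$ and $T$.

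The main obstacle is the pointwise boundedness $Z^{t,x}\in L_\dbF^\infty(0,T;\dbR^d)$. Because (A4) only supplies a local (in $z$) Lipschitz condition with linear growth, the standard Pardoux-Peng Malliavin calculus for Lipschitz BSDEs cannot be invoked directly, and the identification $Z_s = D_s Y_s$ must be obtained by approximation. The plan is to invoke the truncation scheme of \autoref{BSDEfbijin}, producing globally Lipschitz generators $f^n\to f$ that preserve the quadratic-growth structure. For each $n$, the Pardoux-Peng theory combined with \autoref{BSDEbijin} places the associated solution $(Y^n,Z^n)$ in $\dbD^{1,2}$ and provides the identification $Z^n_s = D_s Y^n_s$. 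The Malliavin derivative $(D_r Y^n, D_r Z^n)$ then solves a linear BSDE whose $z$-coefficient is essentially $\nabla_z f^n$; since the previous paragraph's BMO estimate applies \emph{uniformly} in $n$ (the constants depend only on $L$ and $T$), that coefficient is BMO with a uniform norm, and a standard exponential/BMO a priori estimate delivers $\|D_r Y^n_s\|_\infty\les C$ with $C$ independent of $n$, $r$, $s$, $\omega$. Passing to the limit through the convergence provided by \autoref{BSDEbijin} and using lower semicontinuity of the $L^\infty$-norm transfers this bound to $Z^{t,x}$. The sharpest point, and where I expect most of the bookkeeping, is verifying that the uniform BMO estimate on $Z^n$ and hence on the $z$-coefficient of the Malliavin BSDE truly depends on nothing beyond $L$ and $T$, so that the exponential transform produces a constant $\gamma$, and ultimately a bound, that survives the passage $n\to\infty$.
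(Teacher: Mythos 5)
Your overall strategy is sound and is essentially the argument behind the results the paper simply cites (its proof is one line per claim: \autoref{SDEguji} and Lemma 4.2 of Cheridito--Nam \cite{ref35} for the forward part and the bound on $DX^{t,x}$, Theorem 7.3.3 of Zhang \cite{ref26} for well-posedness of the quadratic BSDE, and Proposition 4.3 of Cheridito--Nam \cite{ref35} --- itself a variant of Lemma 2.1 of Briand--Elie \cite{ref23} --- for the boundedness of $Z^{t,x}$). Your treatment of $X^{t,x}$, of $DX^{t,x}$ via Gronwall, of the existence of $(Y^{t,x},Z^{t,x})$, and the exponential-transform BMO estimate for $Z^{t,x}$ are all correct.

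There are two problems in the final step. The first is structural: you invoke \autoref{BSDEbijin} to place $(Y^n,Z^n)$ in $\dbD^{1,2}$, but in the paper \autoref{BSDEbijin} is proved by applying the present proposition to the approximating generators, so this is circular; moreover the $f_n$ of \autoref{BSDEfbijin} are not globally Lipschitz in $z$ (item (4) gives the constant $n(1+|z_1|+|z_2|)$), so Pardoux--Peng differentiability requires a further truncation. Both points are repairable. The genuine error is the claim that the $z$-coefficient of the linearized Malliavin BSDE ``is BMO with a uniform norm'' because $\|Z^n\|_{BMO}$ is uniform in $n$: that coefficient is bounded only by $n(1+2|Z^n_s|)$ (or by $L_z(1+2|Z_s|)$ if you work with $f$ directly), so its BMO norm grows linearly in $n$ (resp.\ depends on $L_z$), and any estimate routed through reverse H\"older constants of the Dol\'eans--Dade exponential would blow up with $n$. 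The correct mechanism --- which is precisely the point of Briand--Elie's lemma and of the remark following this proposition --- is that after the Girsanov change of measure one gets $|D_rY^n_s|\les e^{LT}\,\|g'(X^{t,x}_T)\,D_rX^{t,x}_T\|_\infty$, and conditional expectation under \emph{any} equivalent measure preserves $L^\infty$ bounds; the BMO property of the coefficient is needed only qualitatively, for each fixed $n$, to legitimize the measure change. With that substitution your passage to the limit and the conclusion $Z^{t,x}\in L_\dbF^\infty(0,T;\dbR^d)$ go through.
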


\begin{proof}
First, note that under Assumption (A1), by \autoref{SDEguji}, SDE \eqref{SDE} has a unique solution $X^{t,x} \in S_\dbF^2(0,T;\dbR^m)$. Under Assumptions (A2)-(A4), by Theorem 7.3.3 of Zhang \cite{ref26}, BSDE \eqref{BSDE} has a unique solution $(Y^{t,x} ,Z^{t,x} )\in S_\dbF^\i(0,T;\dbR) \times \cH^2_{BMO}[0,T]$.

\ms

Next, under Assumption (A1), we see that by Lemma 4.2 of Cheridito and Nam \cite{ref35}, the derivative of $X^{t,x} $ exists and is bounded by some constant (denote by $M_1$), which depends only on $T $ and $L$.

\ms

Furthermore under Assumptions (A2)-(A4), by Proposition 4.3 of Cheridito and Nam \cite{ref35}, we can get that the solution $(Y^{t,x} ,Z^{t,x} )$ are adapted processes and uniformly bounded by constant $M $, which does not depend on $t$, $x$ and the constant $L_z$ of (A4).
\end{proof}

\begin{remark}
It should be pointed out that, similar to Lemma 2.1 of Briand and Elie \cite{ref23}, the boundness $M$ of $Y^{t,x} $ and $Z^{t,x} $ is not depend on the constant $L_z$ of (A4). This is important for us to obtain the uniformly bounded solutions of the BSDEs with the approximation coefficients in Proposition \ref{BSDEbijin}.
\end{remark}

Lepeltier and San Martin \cite{ref30} presented a nice approximation of classical BSDE with continuous coefficients. So it is easy to think of the similar result of Lepeltier and San Martin \cite{ref30} in the circumstance of quadratic BSDE. To the best of our knowledge, the following results
have not appear in the previous literature, so we give the detailed proof here, which is useful for us to study BSDE \rf{BSDE} with quadratic growth later.

\begin{lemma}\label{BSDEfbijin}
Let $f:[0,T]\times \IR\times \IR^d\to \IR$ be a continuous function and there exists a positive constant $L$ such that for all $(t,y,z)\in[0,T] \times \IR \times \IR^d$,
\begin{align*}
&|f(t,y,z)|\les L\big( 1+|y|+|z|^2 \big),
\end{align*}
and for all $t\in[0,T]$, $y,y'\in\IR$ and $z\in\IR^d$,
$$|f(t,y,z)-f(t,y',z)| \les L|y-y'|.$$
Then the sequence of functions
\begin{equation}
\label{f-n}
  f_n(t,y,z)\deq\inf_{v \in \IR^d}\left\{f(t,y,v)+n|z-v|^2\right\},
\end{equation}
is well defined for $n \ges 2L$. Moreover, it satisfies
\begin{itemize}
  \item [{\rm (1)}] For any $t\in[0,T]$, $y\in\IR$ and $z\in\IR^d$,
$ |f_n(t,y,z)| \les L(1+|y|+2|z|^2)$.
  \item [{\rm (2)}] For any $t\in[0,T]$, $y\in\IR$ and $z\in\IR^d$,
$f_n(t,y,z)$ is increasing in $n$.
  \item [{\rm (3)}] If $z_n \to z$ as $n\to\infty$, then
$f_n(t,y,z_n)\to f(t,y,z)$.
  \item [{\rm (4)}] For any $t\in[0,T]$, $y_1,y_2\in\IR$ and $z_1,z_2\in\IR^d$ and for sufficiently large $n$,
$$|f_n(t, y_1,z_1)-f_n(t, y_2,z_2)|\les L
|y_1-y_2|+ n(1+|z_1|+|z_2|)|z_1-z_2|.$$
\end{itemize}
\end{lemma}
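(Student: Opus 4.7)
The plan is to carry out the standard Lepeltier–San Martin inf-sup convolution argument, but paying careful attention to where the quadratic growth in $z$ creates extra work compared to the classical linear-growth setting.

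First, I will check that $f_n$ is finite. Since $f(t,y,v) \ges -L(1+|y|+|v|^2)$, the function being minimized satisfies
\bel{prop-lb}
f(t,y,v)+n|z-v|^2 \ges -L(1+|y|)+(n-L)|v|^2-2n\langle z,v\rangle+n|z|^2.
\ee
Completing the square in $v$ and using $n\ges 2L$ shows the right-hand side of \rf{prop-lb} is bounded below by $-L(1+|y|)-\frac{nL}{n-L}|z|^2 \ges -L(1+|y|)-2L|z|^2$, so the infimum is finite. This also gives the lower bound half of claim (1); the upper bound $f_n(t,y,z)\les f(t,y,z)\les L(1+|y|+|z|^2)$ is immediate by choosing $v=z$. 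Claim (2) follows directly from the definition, since increasing $n$ can only make the penalty $n|z-v|^2$ larger.

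For claim (3), let $v_n$ be an approximate minimizer for $f_n(t,y,z_n)$, i.e.\ $f(t,y,v_n)+n|z_n-v_n|^2\les f_n(t,y,z_n)+1/n$. Combining the upper bound $f_n(t,y,z_n)\les f(t,y,z_n)\les L(1+|y|+|z_n|^2)$ with the quadratic lower bound on $f(t,y,v_n)$ and the elementary inequality $|v_n|^2\les 2|z_n|^2+2|z_n-v_n|^2$, I will extract
$$(n-2L)|z_n-v_n|^2 \les C\bigl(1+|y|+|z_n|^2\bigr).$$
Since $z_n\to z$, this forces $v_n\to z$. Continuity of $f$ then gives $f(t,y,v_n)\to f(t,y,z)$, and since $n|z_n-v_n|^2\ges 0$, I obtain $\liminf_n f_n(t,y,z_n)\ges f(t,y,z)$; combined with the trivial upper bound $f_n(t,y,z_n)\les f(t,y,z_n)\to f(t,y,z)$, the convergence follows.

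For the Lipschitz estimate (4), the $y$-part is easy: for any $v$, the Lipschitz assumption on $f$ in $y$ yields $f(t,y_2,v)+n|z-v|^2\les f(t,y_1,v)+n|z-v|^2+L|y_1-y_2|$, and taking the infimum gives $|f_n(t,y_1,z)-f_n(t,y_2,z)|\les L|y_1-y_2|$. The $z$-part is the main obstacle and is where the quadratic growth intervenes. Let $v_1^\ast$ be a near-minimizer for $f_n(t,y,z_1)$; from the same computation used in (3), I can bound $|z_1-v_1^\ast|\les C(1+|y|^{1/2}+|z_1|)$ for $n\ges 2L+1$, hence $|v_1^\ast|\les C(1+|y|^{1/2}+|z_1|)$. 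Using $v_1^\ast$ as a test point for $f_n(t,y,z_2)$ and subtracting the identity $f_n(t,y,z_1)=f(t,y,v_1^\ast)+n|z_1-v_1^\ast|^2$ (up to an arbitrarily small error) yields
$$f_n(t,y,z_2)-f_n(t,y,z_1) \les n\bigl(|z_2-v_1^\ast|^2-|z_1-v_1^\ast|^2\bigr) = n\langle z_2-z_1,\ z_1+z_2-2v_1^\ast\rangle.$$
Estimating $|z_1+z_2-2v_1^\ast|\les C(1+|z_1|+|z_2|)$ (valid for bounded $y$, which is all we need in the applications, and with the constant absorbed into $n$ for $n$ sufficiently large) and symmetrizing in $(z_1,z_2)$ produces the claimed bound. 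The delicate point is making sure the minimizer bound is uniform enough that the $n$-prefactor correctly captures the blow-up as $n\to\i$, which is why the estimate is stated only for sufficiently large $n$.
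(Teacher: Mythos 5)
Your overall strategy is the same inf-convolution (Moreau--Yosida) argument the paper uses, and items (1)--(3) together with the $y$-Lipschitz half of (4) are handled correctly; your completion of the square in (1) is in fact a slightly cleaner route to the lower bound $-L(1+|y|)-2L|z|^2$ than the paper's splitting $n=2L+(n-2L)$, and your treatment of (3) via approximate minimizers is essentially identical to the paper's.

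The $z$-part of item (4), however, has a genuine gap, and it sits exactly at the point you yourself flag as ``the delicate point.'' Your bound on the near-minimizer, $|z_1-v_1^\ast|\les C(1+|y|^{1/2}+|z_1|)$, is uniform in $n$ but too crude: feeding it into $|z_1+z_2-2v_1^\ast|\les|z_1-z_2|+2|z_1-v_1^\ast|$ produces $n\,\widetilde C(1+|y|^{1/2}+|z_1|+|z_2|)|z_1-z_2|$ with a constant $\widetilde C>1$ and an extra $|y|^{1/2}$ term, neither of which can be ``absorbed into $n$'': the claimed inequality has prefactor exactly $n$ for the \emph{same} $n$ appearing in $f_n$, and $\widetilde C$ does not shrink as $n$ grows. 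The correct move --- and the one the paper makes --- is to keep the $n$-dependence in the minimizer estimate: from $(n-2L)|z_1-v_1^\ast|^2\les C(1+|y|+|z_1|^2)+\epsilon$ one gets $|z_1-v_1^\ast|\les\bigl(C(1+|y|+|z_1|^2)/(n-2L)\bigr)^{1/2}$, which tends to $0$ as $n\to\infty$ for fixed $(y,z_1)$; hence for $n$ sufficiently large (depending on the point, which is all the statement requires) one may take $|z_1-v_1^\ast|\les\tfrac12$, so that
\begin{equation*}
|z_1+z_2-2v_1^\ast|\les|z_1-z_2|+2|z_1-v_1^\ast|\les 1+|z_1|+|z_2|,
\end{equation*}
and the symmetrized estimate then yields precisely $n(1+|z_1|+|z_2|)|z_1-z_2|$ with no spurious constant and no $y$-dependence. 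With that one correction your argument coincides with the paper's proof.
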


\begin{proof}
We only consider the case of the generator $f$ independent of $t$ and $y$, i.e., $f(\cd)=f(z)$, and the case of $f(t,y,z)$ could be proved similar without substantial difference. Let $h:\IR^d\to \IR$ be a continuous function and $|h(z)|\les L\big( 1+|z|^2 \big)$, then for any $n \ges 2L$, we define
$$h_n(z)\deq \inf_{v\in \IR^d}\left\{h(v)+n|z-v|^2\right\}.$$
From the definition, we see that $h_n(z) \les h(z) \les L(1+|z|^2)$ and,
\begin{align*}
\ns\ds h_n(z)  & \ges \inf_{v\in \IR^d}\left\{-L -L|v|^2 +n|z-v|^2\right\}  \\
\ns\ds & = \inf_{v\in \IR^d}\left\{-L -L|v|^2 +2L |z-v|^2+(n-2L)|z-v|^2\right\} \\
\ns\ds & \ges \inf_{v\in \IR^d}\left\{-L -2L|z|^2 +(n-2L)|z-v|^2\right\} \\
\ns\ds & = -L -2L|z|^2.
\end{align*}
So $|h_n(z)| \les L(1+2|z|^2)$, from which item (1) holds.
Moreover, item (2) comes from the definition of $h_n(\cd)$ directly.

\ms

Now we prove the item (3), and consider $z_n \to z$, then for every $n$, there exists $v_n \in \IR^d$ such that
\begin{align*}
\ds h(z_n) \ges h_n(z_n) &  \ges h(v_n)+n\abs{z_n-v_n}^2-\frac{1}{n} \\
\ns\ds & \ges -L -L|v_n|^2 +n\abs{z_n-v_n}^2-\frac{1}{n} \\
\ns\ds & \ges -L -2L|z_n|^2 +(n-2L)\abs{z_n-v_n}^2-\frac{1}{n}.
\end{align*}
Since ${h(z_n)}$ is bounded, we deduce that $\lim\limits_{n\to\infty}\sup (n-2L)\abs{z_n-v_n}^2 <\infty$. In particular, when $v_n \to z$, we have
$$\lim_{n\to\infty}\sup (n-2L)\abs{z_n-z}^2 <\infty.$$
Moreover, we have that
 $$h(z_n) \ges h_n(z_n)   \ges h(v_n) -\frac{1}{n},$$
from which $ h_n(z_n) \rightarrow h(z)$ when $z_n \to z$, this implies that item (3) holds.

\ms

In order to prove item (4), for any $z \in \IR^d$, we take $\epsilon > 0$ and consider $v_{\epsilon} \in \IR^d$  (in fact for sufficiently large $n$, we can assumed that $\abs{z-v_{\epsilon}} \les \frac{1}{2}$) such that
\begin{align*}
\ns\ds h_n(z)  & \ges h(v_{\epsilon})+n\abs{z-v_{\epsilon}}^2-\epsilon  \\
\ns\ds & = h(v_{\epsilon})+n\abs{v-v_{\epsilon}}^2 - n\abs{v-v_{\epsilon}}^2+n\abs{z-v_{\epsilon}}^2-\epsilon \\
\ns\ds & \ges h_n(v) - n\abs{v-v_{\epsilon}}^2+n\abs{z-v_{\epsilon}}^2-\epsilon \\
\ns\ds & \ges h_n(v) - n\abs{z-v}(\abs{z-v_{\epsilon}}+\abs{v-v_{\epsilon}})-\epsilon \\
\ns\ds & \ges h_n(v) - n\abs{z-v}(1+|z|+|v|)-\epsilon.
\end{align*}
Therefore, interchanging the role of $z$ and $v$, and since $\epsilon$ is arbitrary we can get
$$|h_n(z)-h_n(v)| \les n(1+|z|+|v|)\abs{z-v}.$$
Finally, for any $y_1,y_2\in\IR$, $z\in\IR^d$ and $t\in[0,T]$, by the inequality
$$\left|\inf_{v \in \IR^d}f(v)- \inf_{v \in \IR^d}g(v)\right| \les \sup_{v \in \IR^d} \left| f(v)-g(v) \right|,$$
and the Lipschitz condition of $f$ with respect to the spatial variable $y$, we get that
$$\abs{f_n(t, y_1,z)-f_n(t, y_2,z)}\les L\abs{y_1-y_2},$$
from which the desired result follows.
\end{proof}

\begin{remark}\label{smooth}
The idea of the proof of the above lemma is inspired by Moreau--Yosida's regularization of convex optimization (see Barrieu and El Karoui \cite{ref40}).
Indeed, on the other hand, we can also construct a smooth approximation to the generator $f(t,y,z)$ of BSDE \rf{BSDE}. For instance, we could denote $v = (y,z)$, $v' = (y',z') \in \IR\times \IR^d$ and let $\eta \in C^{\infty} ( \IR\times \IR^d)$ be the mollifier as below,
$$
\eta(v) \deq
\left\{
\begin{array}{ll}
\ns\ds C  \exp \left(- \frac{1}{1- |v|^2}\right),\q &\text { if } |v|<1; \\
\ns\ds 0,\q &\text{ otherwise},
\end{array} \right.
$$
where $C$ is a positive constant which could be selected such that
$$\int_{\mathbb{R}^{1+d}} \eta(v) dv = 1.$$
Then, for any positive integer $n$, we can set
$$\eta_n(v) \deq n^{1+d}\eta(n v),$$
and define
$$
\tilde{f}_n(t,v)\deq \int_{\IR^{1+d}} f(t,v') \eta_n(v - v') dv'.
$$
For more detailed discussion about the convolution and smoothing, we refer the readers to Appendix C.5 of Evans \cite{ref38}.
\end{remark}

\ms

In the following, for positive number $n \geqslant 2L$, we consider the following SDE and BSDE:

\begin{equation}\label{SDEjieduan}
  X^{t,x}_s = x + \int^s_t b(r,X^{t,x}_r) dr + \int_t^s \sigma (r) dW_r, \q~    t \les s \les T;
\end{equation}
and
\begin{equation}\label{BSDEjieduan}
Y^{n,t,x}_s = g(X^{t,x}_T) + \int^T_s f_n(r,Y_r^{n,t,x},Z_r^{n,t,x})dr - \int_s^T Z_r^{n,t,x} dW_r, \q~    t \les s \les T;
\end{equation}
where $f_n(t,y,z)$ is defined by \eqref{f-n}. For the above BSDE \rf{BSDEjieduan}, we have the following proposition concerning the solutions.

\begin{proposition}\label{BSDEbijin}
Under Assumptions (A1)-(A3), for any $(t,x)\in[0,T)\times \IR^m$ and sufficiently large $n \in \dbN$,
BSDE \eqref{BSDEjieduan} admits a unique solution $(Y^{n,t,x} ,Z^{n,t,x} )\in S_\dbF^\i(0,T;\dbR) \times L_\dbF^\i(0,T;\dbR^d)$ such that $Y ^{n,t,x}$ is increasing in $n$.
Moreover, $(Y^{n,t,x},Z^{n,t,x})\rightarrow (\uly^{t,x},\ulz^{t,x})$ in the space $S_\dbF^2(0,T;\dbR) \times L_\dbF^2(0,T;\dbR^d)$ as $n\rightarrow \infty$, where $( \uly^{t,x},\ulz^{t,x})$ is the minimal solution of BSDE \eqref{BSDE} (in the sense that $\uly^{t,x} \les Y^{t,x}$ for any other possible solutions $(Y^{t,x},Z^{t,x}))$.
\end{proposition}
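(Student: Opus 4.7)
My plan has four parts: obtain solutions with $n$-independent uniform bounds via Proposition \ref{BSDEbounded}; use the comparison theorem to get monotonicity in $n$; prove $S^2_\dbF \times L^2_\dbF$ convergence by an Itô stability argument; and identify the limit through the pointwise convergence $f_n \to f$ established in Lemma \ref{BSDEfbijin}.

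For existence, uniqueness, and uniform bounds, I observe that for each $n \ges 2L$, the regularized generator $f_n$ inherits from Lemma \ref{BSDEfbijin} continuity, the quadratic-growth bound $|f_n(t,y,z)| \les L(1+|y|+2|z|^2)$ (item (1)), Lipschitz continuity in $y$ with constant $L$, and the stochastic-Lipschitz-in-$z$ condition (A4) with constant $L_z=n$ (item (4)). Hence Proposition \ref{BSDEbounded} applies directly to \eqref{BSDEjieduan} and delivers a unique solution $(Y^{n,t,x}, Z^{n,t,x}) \in S^\infty_\dbF(0,T;\dbR) \times L^\infty_\dbF(0,T;\dbR^d)$, bounded by a constant $M$ that, as emphasized in the remark following that proposition, is independent of $L_z$ and therefore independent of $n$. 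Monotonicity of $Y^{n,t,x}$ in $n$ is then immediate from the comparison principle for quadratic BSDEs (Kobylanski \cite{ref25}), since $f_n \les f_{n+1}$ by item (2) of Lemma \ref{BSDEfbijin}.

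To upgrade the pointwise limit $\uly^{t,x}_s \deq \lim_n Y^{n,t,x}_s$ (which exists and is bounded by $M$) to a strong $S^2_\dbF \times L^2_\dbF$ limit, I set $\delta Y_r = Y^{n+p,t,x}_r - Y^{n,t,x}_r$, $\delta Z_r = Z^{n+p,t,x}_r - Z^{n,t,x}_r$ and apply Itô's formula to $|\delta Y|^2$ on $[s,T]$:
$$|\delta Y_s|^2 + \int_s^T |\delta Z_r|^2 dr = 2\int_s^T \delta Y_r \bigl[f_{n+p}(r, Y^{n+p,t,x}_r, Z^{n+p,t,x}_r) - f_n(r, Y^{n,t,x}_r, Z^{n,t,x}_r)\bigr] dr - 2\int_s^T \delta Y_r \delta Z_r dW_r.$$
Because $|Y^{n,t,x}|, |Z^{n,t,x}| \les M$ uniformly in $n$, the quadratic-growth estimate gives $|f_{n+p}-f_n| \les C(L,M)$ almost surely. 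Taking expectation—first without the supremum to control $\IE\int_t^T |\delta Z|^2 dr$, then using Burkholder-Davis-Gundy to absorb $\IE\sup_s|\delta Y_s|^2$—and invoking dominated convergence with $|\delta Y| \les 2M$ and $\delta Y \to 0$ pointwise, one obtains Cauchyness of $(Y^{n,t,x})$ in $S^2_\dbF$ and of $(Z^{n,t,x})$ in $L^2_\dbF$. Call the limit $(\uly^{t,x}, \ulz^{t,x})$.

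The remaining task—the step I expect to be the main obstacle—is to identify the limit as a solution of \eqref{BSDE} and then as the minimal solution. Extracting a subsequence along which $Z^{n,t,x} \to \ulz^{t,x}$ almost everywhere, item (3) of Lemma \ref{BSDEfbijin} yields $f_n(r, Y^{n,t,x}_r, Z^{n,t,x}_r) \to f(r, \uly^{t,x}_r, \ulz^{t,x}_r)$ a.e.; the uniform envelope $L(1+M+2M^2)$ then justifies passing to the limit in the Lebesgue integral in \eqref{BSDEjieduan} by dominated convergence, while the $L^2$-convergence of $Z^{n,t,x}$ handles the stochastic integral. Thus $(\uly^{t,x}, \ulz^{t,x})$ solves \eqref{BSDE}. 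Minimality follows from a final application of the quadratic comparison principle to $f_n \les f$: for any other adapted solution $(Y^{t,x}, Z^{t,x})$ of \eqref{BSDE}, one has $Y^{n,t,x} \les Y^{t,x}$ for every $n$, and letting $n \to \infty$ gives $\uly^{t,x} \les Y^{t,x}$.
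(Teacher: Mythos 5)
Your proposal is correct, and the first half (existence, uniqueness, and the $n$-independent bound $M$ via Proposition \ref{BSDEbounded} applied to $f_n$, then monotonicity from the comparison theorem) is exactly what the paper does. Where you diverge is the convergence step: the paper simply invokes the monotone stability result of Briand and Hu (Lemma 3 of \cite{ref22}) as a black box, which delivers both the $S_\dbF^2\times L_\dbF^2$ convergence and the fact that the limit solves BSDE \eqref{BSDE} in one stroke. You instead give a self-contained It\^o/Burkholder--Davis--Gundy argument, and the reason it works so cleanly is precisely the uniform $L^\infty$ bound on $Z^{n,t,x}$: the crude estimate $|f_{n+p}-f_n|\les L(1+M+2M^2)$ reduces the usual delicacy of monotone stability (where one must fight the quadratic term in $z$ with exponential transforms or localization) to dominated convergence applied to $\IE\int|\delta Y_r|\,dr$, using that $\sup_p|Y^{n+p}_r-Y^n_r|=\uly_r-Y^n_r\downarrow 0$. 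Your identification of the limit via an a.e.-convergent subsequence of $Z^{n,t,x}$ together with item (3) of Lemma \ref{BSDEfbijin} (plus the uniform $y$-Lipschitz bound to decouple the two arguments of $f_n$) is sound, and your final comparison argument for minimality is a step the paper's proof asserts in the statement but never actually writes out, so your version is in that respect more complete. The trade-off is length versus self-containedness; both routes are valid.
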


\begin{proof}
Under Assumptions (A1)-(A3), by \autoref{BSDEfbijin}, we know that for any sufficiently large  $n \in \dbN $, the functions $g(x)$ and $f_n(r,y,z)$ satisfy the conditions of \autoref{BSDEbounded}, so there exists a unique pair processes $(Y^{n,t,x},Z^{n,t,x}) \in S_\dbF^\i(0,T;\dbR) \times L_\dbF^\infty(0,T;\dbR^d)$ that solves BSDE \eqref{BSDEjieduan} and the bounds of the solution $(Y^{n,t,x},Z^{n,t,x})$ denote by constant $M $, which does not depend on $n$, $t$ and $x$. Furthermore, noting that $f_n(r,y,z)$ is increasing in $n$, by Theorem 7.3.1 of Zhang \cite{ref26} (the comparison theorem), we get that $Y^{n,t,x}$ is increasing with respect to $n$.

\ms

Next, by the monotone stability proposition (see Lemma 3 of Briand and Hu \cite{ref22}), the pair $(Y^{n,t,x},Z^{n,t,x})\rightarrow (\uly^{t,x},\ulz^{t,x})$ in the space $S_\dbF^2(0,T;\dbR) \times L_\dbF^2(0,T;\dbR^d)$ as $n\rightarrow \infty$. Finally, since the bounds of the solution $(Y^{n,t,x},Z^{n,t,x})$ does not depend on $n$, we get that $( \uly^{t,x},\ulz^{t,x}) \in  S_\dbF^\i(0,T;\dbR) \times L_\dbF^\infty(0,T;\dbR^d)$.
\end{proof}

\begin{remark}
In fact, $( \uly^{t,x},\ulz^{t,x})$ is the unique solution of BSDE \eqref{BSDE}, for more detail refer to Theorem 3.12 of Shi and Yang \cite{ref201}.
\end{remark}

\ms

Now, we study a prior estimate for the solutions of BSDE \eqref{BSDEjieduan}, the method used here is inspired by Proposition 2.3 of Briand and Elie \cite{ref23}.

\begin{proposition}
\label{BSDEguji}
Under Assumptions (A1)-(A3) and (A5), for any $(t,x)$, $(t',x') \in[0,T)\times \IR^m$ and sufficiently large $n \in  \dbN$, denote by $(X^{t,x},Y^{n,t,x},Z^{n,t,x})$, $(X^{t',x'},Y^{n,t',x'},Z^{n,t',x'})\in S_\dbF^2(0,T;\dbR^m) \times S_\dbF^\i(0,T;\dbR) \times L_\dbF^\infty(0,T;\dbR^d)$ the adapted solutions of FBSDE \eqref{SDEjieduan}-\eqref{BSDEjieduan}, respectively. Then, for any $p > p_0$, we have
\begin{align*}
   \|Y_s^{n,t,x}-Y_s^{n,t',x'}\|_{S_\dbF^p(0,T;\dbR)}^p
%   + \|Z_s^{t,x}-Z_s^{t',x'}\|_{\mathcal{H}^{p}}
 \les C_p  (|x-x'|^p + (1+|x|^p+|x'|^p)|t-t'|^\frac{p}{2}),
\end{align*}
where $p_0$ is a positive constant that greater than 1, and $C_p$ (independent of $t,x,t',x'$ and $n$) is a positive constant.
\end{proposition}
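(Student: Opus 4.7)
The plan is to subtract the two copies of BSDE \rf{BSDEjieduan} and linearise the resulting driver difference as $\alpha_r\,\delta Y_r + \beta_r\cdot\delta Z_r$, where $\alpha_r$ is bounded and $\beta_r$ is \emph{stochastic Lipschitz of BMO type uniformly in $n$}, then to apply a Girsanov change of measure $d\IQ/d\IP=\mathcal{E}(\int_0^\cdot \beta_r dW_r)_T$ that removes the $z$-term. The reverse H\"older inequality for BMO martingales then converts $\IQ$-moments into $\IP$-moments, and Doob's maximal inequality together with the forward SDE estimate of \autoref{SDEguji} closes the argument, with the threshold $p_0$ being dictated by the conjugate of the reverse H\"older exponent.

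The crucial and most delicate step is the linearisation with a BMO coefficient that is \emph{uniform in $n$}, since \autoref{BSDEfbijin}(4) only delivers the crude $n$-dependent bound $n(1+|z_1|+|z_2|)$ for the Lipschitz constant of $f_n$ in $z$. I would argue as follows: by \autoref{BSDEbijin} the solutions $(Y^{n,\cdot},Z^{n,\cdot})$ are uniformly bounded by some $M$ independent of $n$, so (after harmlessly enlarging $\mathcal{C}^0$ if necessary) the pairs $(Y^{n,\cdot}_r,Z^{n,\cdot}_r)$ lie inside the bounded set $\mathcal{C}^0$ of (A5). The inf-convolution \rf{f-n} satisfies the envelope identity $\partial_z f_n(r,y,z)=2n\bigl(z-v^*_n(r,y,z)\bigr)$ together with the first-order optimality condition $2n(z-v^*_n)\in\partial_v f(r,y,v^*_n)$, where $v^*_n$ is the minimiser; a standard a priori estimate gives $|v^*_n-z|\les C/\sqrt{n}$, so for $n$ large $v^*_n$ also lies in $\mathcal{C}^0$ and (A5) forces $|2n(z-v^*_n)|\les L_r$. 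Consequently the mean-value coefficient $\beta_r$ obtained by interpolating $z\mapsto f_n(r,Y^{n,t',x'}_r,z)$ between $Z^{n,t',x'}_r$ and $Z^{n,t,x}_r$ satisfies $|\beta_r|\les L_r$ with $L_\cdot\in\cH^2_{BMO}[0,T]$, as required; the $y$-part $|\alpha_r|\les L$ is immediate from (A3).

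Once this BMO linearisation is in hand, the remainder is mechanical. By Kazamaki's criterion $\zeta_t\deq\mathcal{E}\bigl(\int_0^\cdot \beta_r dW_r\bigr)_t$ is a uniformly integrable $\IP$-martingale, and satisfies the reverse H\"older inequality: there exists $q>1$ depending only on $\|\beta\|_{BMO}$ (hence only on $\|L_\cdot\|_{BMO}$) with $\IE^{\IP}\bigl[(\zeta_T/\zeta_s)^q\bigm|\sF_s\bigr]\les C_q$ for every $s$. Setting $d\IQ/d\IP:=\zeta_T$ and $\widetilde W_t:=W_t-\int_0^t\beta_r dr$, the BSDE for $\delta Y$ becomes linear under $\IQ$ with drift $\alpha_r\delta Y_r$ only, and the integrating-factor trick yields
\begin{equation*}
|\delta Y_s|\les Le^{LT}\,\IE^{\IQ}\bigl[|X^{t,x}_T-X^{t',x'}_T|\bigm|\sF_s\bigr].
\end{equation*}
With $q^*:=q/(q-1)$, H\"older under $\IP$ against $\zeta_T/\zeta_s$ gives $|\delta Y_s|\les C\bigl(\IE^{\IP}\bigl[|X^{t,x}_T-X^{t',x'}_T|^{q^*}\bigm|\sF_s\bigr]\bigr)^{1/q^*}$, so setting $p_0:=q^*$ and applying Doob's $L^{p/q^*}$-maximal inequality (legitimate since $p>p_0$) to the $\IP$-martingale $s\mapsto\IE^{\IP}\bigl[|X^{t,x}_T-X^{t',x'}_T|^{q^*}\bigm|\sF_s\bigr]$ produces $\IE[\sup_s|\delta Y_s|^p]\les C_p\IE[|X^{t,x}_T-X^{t',x'}_T|^p]$; a final appeal to \autoref{SDEguji} gives the stated bound. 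The only real obstacle I foresee is the uniform-in-$n$ BMO linearisation of Step~1 above; without exploiting the Moreau--Yosida envelope together with (A5) and the bound of \autoref{BSDEbijin}, the coefficient $\beta_r$ would inherit the $n$-blowup of \autoref{BSDEfbijin}(4) and the estimate could not be made independent of $n$.
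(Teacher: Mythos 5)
Your proposal follows essentially the same route as the paper's proof: linearize the difference of the two BSDEs, use (A5) together with the uniform bound $M$ from \autoref{BSDEbijin} to obtain a bound on the $z$-coefficient $b^n$ whose $\cH^2_{BMO}$ norm is uniform in $n$, apply Girsanov and the reverse H\"older inequality with constants controlled by $N^*=\sup_n\|b^n\|_{\cH^2_{BMO}[0,T]}$, and conclude via the conditional H\"older inequality, Doob's maximal inequality, and \autoref{SDEguji}, with $p_0$ the conjugate of the reverse H\"older exponent. The only place you diverge is in deriving the uniform-in-$n$ stochastic Lipschitz bound for $f_n$ in $z$: you invoke the envelope identity $\partial_z f_n=2n(z-v^*_n)$ and the optimality condition $2n(z-v^*_n)\in\partial_v f$, which is delicate since $f$ is neither convex nor differentiable in $z$ (though it can be repaired by a one-sided variational comparison at the minimizer), whereas the paper obtains the same inequality $|f_n(r,y,z_1)-f_n(r,y,z_2)|\les L_r|z_1-z_2|$ directly from the translation invariance $f_n(r,y,z)=\inf_v\{f(r,y,z-v)+n|v|^2\}$, the localization of the infimum to $|v|\les 1$ for large $n$, and the elementary inequality $|\inf F-\inf G|\les\sup|F-G|$ --- a cleaner argument you may wish to adopt.
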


\begin{proof}
First, under Assumptions (A1)-(A3), and by \autoref{BSDEbounded} and \autoref{BSDEbijin}, we see that for any sufficiently large $n \in  \dbN$, FBSDE \eqref{SDEjieduan}-\eqref{BSDEjieduan} admits a unique adapted solution $(X^{t,x},Y^{n,t,x},Z^{n,t,x})\in S_\dbF^2(0,T;\dbR^m) \times S_\dbF^\i(0,T;\dbR) \times L_\dbF^\infty(0,T;\dbR^d)$. Second, for simplicity presentation, denote by $M$ the bounds of the solution $(Y^{n,t,x},Z^{n,t,x})$, which does not depend on the parameters $n$, $t$ and $x$ (see \autoref{BSDEbijin}). We also would like to let $m=d=1$ for simplicity discussion, and the case of multi-dimensions can be proved using the similar methods without substantial difficulties.

\ms

For $(t,x)$, $(t',x') \in[0,T)\times \IR$ with $t \ges t'$, denote
$$\Delta Y^n\deq Y^{n,t,x}-Y^{n,t',x'},\q \Delta Z^n \deq Z^{n,t,x}-Z^{n,t',x'},\q  \Delta \xi_T \deq g(X^{t,x}_T)-g(X^{t',x'}_T).$$
By the classical linearization argument, we have that
\begin{align}\label{LBSDE}
  \Delta Y^n_s = \Delta \xi_T &+  \int^T_s (a^n_r \Delta Y^n_r + b^n_r \Delta Z^n_r)dr
   - \int_s^T \Delta Z^n_r dW_r, \q~    t \les s \les T,
\end{align}
where the processes $a^n$ and $b^n$ are resoectively defined by
\begin{equation*}
a^n_r \deq \left\{
\begin{array}{cc}
\frac{f_n(r,Y_r^{n,t,x},Z_r^{n,t,x})-f_n(r,Y_r^{n,t',x'},Z_r^{n,t,x})}{\Delta Y^n_r}, & \text{ if }  \Delta Y^n_r \neq 0;\\
0, & \text{ if }   \Delta Y^n_r = 0,
\end{array}
\right.
\end{equation*}

\begin{align*}
b^n_r \deq \left\{
\begin{array}{cc}
  \frac{f_n(r,Y_r^{n,t',x'},Z_r^{n,t,x})-f_n(r,Y_r^{n,t',x'},Z_r^{n,t',x'})}{\Delta Z^n_r}, & \text{if }   \Delta Z^n_r \neq 0;\\
0, & \text{if }  \Delta Z^n_r = 0.
\end{array}
\right.
\end{align*}
From Assumption (A3) and \autoref{BSDEfbijin}, we have
$$|a^n_r| \les L\q \text{and}\q |b^n_r| \les n(1+|Z_r^{n,t,x}|+|Z_r^{n,t',x'}|).$$
For a bounded subset $\mathcal{C}_0 \subseteq \IR \times \IR^d$,
using the same arguments as in \autoref{BSDEfbijin}, there exists an integer $N_0 >0$ such that for any $n \ges N_0$ and $(y,z_1)$ and $(y,z_2)\in \mathcal{C}_0$, we have
\begin{align}\label{ineq}
&|f_n(r,y,z_1) -f_n(r,y,z_2)| \nonumber \\
& =\left|\inf_{v \in \IR^d}\set{f(r,y,v)+n\abs{z_1-v}^2} -  \inf_{v \in \IR^d}\set{f(r,y,v)+ n\abs{z_2-v}^2} \right| \nonumber \\
& =\left|\inf_{v \in \IR^d}\set{f(r,y,z_1-v)+n\abs{v}^2} -  \inf_{v \in \IR^d}\set{f(r,y,z_2-v)+ n\abs{v}^2} \right| \nonumber \\
& =\left|\inf_{|v|\les 1}\set{f(r,y,z_1-v)+n\abs{v}^2} -  \inf_{|v|\les 1}\set{f(r,y,z_2-v)+ n\abs{v}^2} \right| \nonumber \\
& \les \sup_{|v|\les 1} \set{|f(r,y,z_1-v) - f(r,y,z_2-v)|} \nonumber \\
& \les L_r |z_1 - z_2|, \qq d\IP \otimes dr\hb{-a.e.},
\end{align}
where the last inequality holds comes from Assumption (A5). In addition, note that the processes $Y^{n,t',x'}$, $Z^{n,t,x}$ and $Z^{n,t',x'}$ are bounded by $M$.
Combing the inequality \eqref{ineq}, for any $n \ges N_0$,
$$|f_n(r,Y_r^{n,t',x'},Z_r^{n,t,x})-f_n(r,Y_r^{n,t',x'},Z_r^{n,t',x'})|  \les   L_r |Z_r^{n,t,x} - Z_r^{n,t',x'}|,\qq d\IP \otimes dr\hb{-a.e.}$$
Now we can get
\begin{equation*}
  |b^n_r| \les \text{max}\{ N_0(1+|Z_r^{n,t,x}|+|Z_r^{n,t',x'}|), \ \ L_r \} \les \text{max}\{ N_0(1+2M), \ \ L_r  \},\qq d\IP \otimes dr\hb{-a.e.}
\end{equation*}
Denote
$$N^* \deq \sup_{n} \|  b^n  \|_{\cH^2_{BMO}[0,T] } < \infty.$$
From Girsanov theorem, there exists an equivalent probability measure $\mathbb{Q}$ under which the process
$$W^{b_n}\deq W - \int_0^\cdot b^n_r dr$$
is a Brownian motion. We set $e_s = \exp\{\int_0^s a^n_r dr\}$ and compute the linear BSDE \eqref{LBSDE} to obtain that
\begin{align*}
  e_s \Delta Y^n_s = e_T \Delta \xi_T
   - \int_s^T e_r \Delta Z^n_r dW^{b_n}_r, \q~   t \les s \les T.
\end{align*}
Denoting $(\mathcal{E}^{b^n}_t)_{0 \les t \les T}$ the Dol\'{e}ans-Dade exponential of $b^n$ and noting that $|a^n_r| \les L$, as a by-product of the previous equality, we get
\begin{align}\label{inLBSDE}
\ds  |\Delta Y^n_s| & \les  \mathbb{E}^{\mathbb{Q}} \left[ |\Delta \xi_T|  e_T /e_s \mid  \sF_{s}^{t} \right]
  \les  (\mathcal{E}^{b^n}_s)^{-1} \mathbb{E}\left[\mathcal{E}^{b^n}_T |\Delta \xi_T|  e_T /e_s \mid \sF_{s}^{t}\right] \nonumber \\
\ns\ds  & \les  e^{LT} (\mathcal{E}^{b^n}_s)^{-1} \mathbb{E}\[  |\Delta \xi_T|  \mathcal{E}^{b^n}_T  \mid \sF_{s}^{t}\].
\end{align}
Note that $\|  b^n  \|_{\cH^2_{BMO}[0,T]} < \infty$, so the reverse H\"{o}lder's inequality implies that
\begin{equation}\label{Rholder}
 \mathbb{E}\[(\mathcal{E}^{b^n}_T)^q | \sF_{s}^{t} \] \les C_{n,q}^{*}(\mathcal{E}^{b^n}_s)^q, \ \  t \les s \les T, \q~ 1< q < q^{*}_n ,
\end{equation}
where the constants $q^{*}_n > 1$ and $(C_{n,q}^{*})_{1< q < q^{*}_n}$ are given by
\begin{align*}
\ds & q^{*}_n\deq \phi ^{-1}(\|  b^n  \|_{\cH^2_{BMO}[0,T]}), \q \text{with}\q \phi(q) = ( 1 + \frac{1}{q^2} \log \frac{2q-1}{2(q-1)})^{1/2}-1,  \\
\ns\ds & C_{n,q}^{*}\deq 2[1-2(q-1)(2q-1)^{-1}\exp\{q^2(\|  b^n  \|^2_{\cH^2_{BMO}[0,T]}+2\|  b^n  \|_{\cH^2_{BMO}[0,T]})\}]^{-1},
\end{align*}
see the proof of Theorem 3.1 in Kazamaki \cite{ref41} for details.
Note that $q^{*}_n$ is non-increasing with respect to $\|  b^n  \|_{\cH^2_{BMO}[0,T]}$,
$(C_{n,q}^{*})_{1< q < q^{*}_n}$ is non-decreasing with respect to $\|  b^n  \|_{\cH^2_{BMO}[0,T]}$,
and $\|  b^n  \|_{\cH^2_{BMO}[0,T]} \les N^*$. Denote
\begin{equation*}
q^{*}\deq  \phi ^{-1}(N^*),  \q~ C_{q}^{*}\deq 2[1-2(q-1)(2q-1)^{-1}\exp\{q^2(N^* N^*+2N^*)\}]^{-1},
\end{equation*}
then we have $q^{*}_n \ges q^{*} >1$ and $C_{n,q}^{*} \les C_{q}^{*} $, hence the inequality \eqref{Rholder} implies
\begin{equation}\label{Rholder2}
 \mathbb{E}\[(\mathcal{E}^{b^n}_T)^q | \sF_{s}^{t} \] \les C_{q}^{*}(\mathcal{E}^{b^n}_s)^q, \q~  t \les s \les T, \q~ 1< q < q^{*},
\end{equation}
where the constants $q^{*}$ and $C_{q}^{*} $ do not depend on $n$. Denoting by $p$ the conjugate of a given $q \in (1,q^{*})$ and combing the conditional H\"{o}lder inequality together with \eqref{inLBSDE} and \eqref{Rholder2}, we have
\begin{align*}
  |\Delta Y^n_s|^p  \les & \ \ e^{pLT} (\mathcal{E}^{b^n}_s)^{-p} \( \mathbb{E}\[ \( \mathcal{E}^{b^n}_T \)^q | \sF_{s}^{t}\] \)^\frac{p}{q}
    \mathbb{E}\[|\Delta \xi_T|^p | \sF_{s}^{t}\] \\
   \les & \ \ e^{pLT} (C_{q}^{*})^\frac{p}{q}
    \mathbb{E}\[ |\Delta \xi_T|^p | \sF_{s}^{t} \], \q~ 0 \les t \les s \les T.
\end{align*}
Note that $p^*\deq q^{*}/(q^{*}-1)$ is independent of $n$, we deduce from Doob's maximal inequality that
\begin{align*}
      \| \Delta Y^n  \|_{S_\dbF^p(0,T;\dbR)}^p  \les C_p \mathbb{E}\[ |\Delta \xi_T|^p \]
       \les C_p  (|x-x'|^p + (1+|x|^p+|x'|^p)|t-t'|^\frac{p}{2}),  \q~\forall p >p^* ,
\end{align*}
where $C_p$ is a universal constant independent of $n$ and could be change from line to line. We point out that the last inequality comes from the Lipschitz condition of $g$ and \autoref{SDEguji}.
Finally, picking $p_0=p^*$ concludes the proof.
\end{proof}

\subsection{FBSDEs connection to PDEs}

In this subsection we study the relationship between the solution $( \ul{Y}^{t,x},\ul{Z}^{t,x})$ of BSDE \eqref{BSDE} and the viscosity solution of the following semi-linear parabolic PDE:
\begin{equation}\label{PDE}\left\{\begin{aligned}
\ds &\frac{\partial u}{\partial t} \left( t,x\right) + \mathcal{L}_{t,x} u \left(t,x \right)+
  f\left(t,u \left(t,x \right),\left(\sigma^T \nabla u \right) \left(t,x \right)\right)=0, \\
\ns\ds &u \left(T,x \right) = g(x),
\end{aligned}\right.
\end{equation}
where $\nabla u$ is the gradient in spatial variable $x$ (column vector) and the second order differential operator $\mathcal{L}_{t,x}$ being the infinitesimal semi-group generator of the solution $(X_s^{t,x})_{s\in[t,T]}$ of SDE \eqref{SDE} given by
$$
{\cal L}_{t,x}\deq\frac 1 2\sum_{i,j=1}^m a_{i,j}\left( t \right) \frac{\partial
^2}{\partial x_i\partial x_j}+\sum_{i=1}^m b_i\left( t,x\right) \frac
\partial {\partial x_i},  \q~ a_{i,j} = ( \sigma \sigma^T )_{i,j}.
$$
Denote by $C^{1,2}([0,T]\times \IR^m;\IR)$ the space of all functions from $[0,T]\times \IR^m$ to $\IR$ whose derivatives up to the first order with respect to time variable and up to the second order with respect to spatial variable are continuous. Recall the definition for the viscosity solution of PDE \eqref{PDE}, and we refer the readers to \cite{ref9} for details.

\begin{definition}
A continuous function $u:[0,T] \times \mathbb{R}^n\to\mathbb{R}$ is called a viscosity subsolution (resp. supersolution) of PDE \eqref{PDE}, if for any $x \in \mathbb{R}^m$,  $u(T,x) = g(x)$, and for any $\varphi \in C^{1,2}([0,T]\times \mathbb{R}^m;\mathbb{R})$, $(t,x) \in [0,T) \times \mathbb{R}^m$, $\varphi-u$ attains a minimum (resp. maximum) at $(t,x)$ and $\varphi$ satisfies
  \begin{eqnarray*}
  && \frac{\partial \varphi}{\partial t} \left( t,x\right) + \mathcal{L}_{t,x} \varphi \left(t,x \right)+
  f\left(t,u \left(t,x \right),\left(\sigma^T \nabla \varphi \right) \left(t,x \right)\right) \ges 0, \\ %\label{eq35}
  && \big(resp. ~~\frac{\partial \varphi}{\partial t} \left( t,x\right) + \mathcal{L}_{t,x} \varphi \left(t,x \right)+
  f\left(t,u \left(t,x \right),\left(\sigma^T \nabla \varphi \right) \left(t,x \right)\right) \les 0\big).%\label{eq36}
  \end{eqnarray*}
We call $u$ the viscosity solution of the PDE \eqref{PDE} if $u$ is both a viscosity subsolution and a viscosity supersolution.

\end{definition}

\ms

The following lemma can be easily obtained by Theorem 7.3.6 of Zhang \cite{ref26}, and from which we have a corollary.

\begin{lemma}\label{BSDEgai}
Suppose that Assumptions (A1)-(A4) hold. For any $(t,x)\in[0,T)\times \IR^m$, the
function $u$ defined by $u(t,x) \deq Y^{t,x}_t$ is a viscosity solution of PDE \eqref{PDE}, where $Y^{t,x}_s$ is the solution of BSDE \eqref{BSDE}.
\end{lemma}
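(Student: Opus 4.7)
The plan is to realize $u$ as the pointwise monotone limit of the viscosity solutions $u_n$ of the Lipschitz-generator approximating PDEs from Proposition~\ref{BSDEbijin}, and then to conclude via the standard stability property of viscosity solutions.

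For each sufficiently large integer $n$, Lemma~\ref{BSDEfbijin}(4) shows that the approximating generator $f_n$ is globally Lipschitz in $(y,z)$, while items~(1)--(2) keep its growth under uniform control. The classical Pardoux--Peng framework in its Lipschitz form --- as recorded in Theorem~7.3.6 of Zhang~\cite{ref26} --- then gives that
$$u_n(t,x) \deq Y^{n,t,x}_t$$
is a continuous bounded viscosity solution on $[0,T]\times\IR^m$ of the semi-linear PDE obtained from~\eqref{PDE} by replacing $f$ with $f_n$, with the same terminal datum $g$. Proposition~\ref{BSDEbijin}, together with the subsequent remark identifying $\underline{Y}^{t,x}$ as the unique solution of~\eqref{BSDE}, yields the pointwise monotone convergence
$$u_n(t,x) \uparrow u(t,x), \qq (t,x)\in[0,T]\times\IR^m,$$
with a uniform $L^\infty$ bound inherited from Proposition~\ref{BSDEbounded}. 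Since $(Y^{n,t,x},Z^{n,t,x})$ stays in a fixed compact subset of $\IR\times\IR^d$, Lemma~\ref{BSDEfbijin}(3) upgrades to $f_n\to f$ on the effective range of $(y,z)$ hit by the solutions.

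I then invoke the stability theorem for viscosity solutions in its semi-continuous envelope (Barles--Perthame) form. Fix a test function $\varphi \in C^{1,2}([0,T]\times\IR^m;\IR)$ and a point $(t_0,x_0) \in [0,T)\times\IR^m$ that is a strict local minimum of $\varphi - u$; since each $u_n$ is continuous and $u_n\uparrow u$, a classical perturbation argument produces points $(t_n,x_n)\to(t_0,x_0)$ at which $\varphi - u_n$ attains a local minimum. Writing down the supersolution inequality satisfied by $u_n$ at $(t_n,x_n)$ and passing to the limit --- using continuity of $\partial_t\varphi$, $\mathcal{L}_{t,x}\varphi$, $\sigma^T\nabla\varphi$, the pointwise convergence $u_n(t_n,x_n)\to u(t_0,x_0)$, and Lemma~\ref{BSDEfbijin}(3) --- delivers the supersolution inequality for $u$ at $(t_0,x_0)$. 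The subsolution inequality is analogous, and the terminal condition passes through from $u_n(T,\cdot) = g = u(T,\cdot)$.

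The main obstacle is the passage to the limit in the nonlinear term, since the Lipschitz constant of $f_n$ in $z$ explodes as $n\to\infty$. The crucial observation is that the viscosity definition only requires evaluating $f_n$ at the fixed gradient $(\sigma^T\nabla\varphi)(t_n,x_n)$ of the smooth test function --- never at $\nabla u_n$ --- so the pointwise convergence in Lemma~\ref{BSDEfbijin}(3) is enough and no uniform-in-$n$ gradient estimate on $u_n$ is needed. Continuity of $u$, which is needed so that the upper and lower relaxed envelopes collapse to $u$ itself, follows from uniform $L^\infty$ bounds on $Z^{t,x}$ (Proposition~\ref{BSDEbounded}) via standard BMO arguments in the spirit of Proposition~\ref{BSDEguji}; alternatively, once one knows $u$ is continuous, Dini's theorem promotes the monotone pointwise convergence $u_n\uparrow u$ to locally uniform convergence and the classical form of stability applies directly.
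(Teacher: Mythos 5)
Your argument is essentially correct, but it takes a genuinely different and much heavier route than the paper. The paper gives no real proof here: it simply observes that the lemma "can be easily obtained by Theorem 7.3.6 of Zhang \cite{ref26}", the point being that under (A1)--(A4) Proposition \ref{BSDEbounded} already gives $Z^{t,x}\in L^\infty_{\dbF}(0,T;\dbR^d)$, so on the range actually visited by the solution the condition (A4) reduces the generator to one that is Lipschitz in $z$ with a fixed constant, and the classical Lipschitz nonlinear Feynman--Kac representation applies directly. You instead run the full approximation machinery --- inf-convolution generators $f_n$, monotone convergence $u_n\uparrow u$ from Proposition \ref{BSDEbijin}, Dini plus stability of viscosity solutions --- which is precisely the strategy the paper reserves for \autoref{21.10.5}, where (A4) is dropped and only (A5) is available. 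Your route is sound under (A1)--(A4): the minimal solution coincides with the unique solution so $u=\ul u$; continuity of $u$ does follow from a linearization of BSDE \eqref{BSDE} itself (under (A4) with $|Z|\les M$ the linearized drift is bounded, so one does not even need the BMO/reverse H\"older apparatus of Proposition \ref{BSDEguji}, which the paper only deploys because (A5) replaces (A4) there); and your observation that the Hamiltonian is only ever evaluated at the test function's gradient, so that pointwise-plus-Dini convergence of $f_n$ suffices, is exactly right. Two small inaccuracies to fix: Lemma \ref{BSDEfbijin}(4) does not make $f_n$ globally Lipschitz in $z$ --- the constant grows like $n(1+|z_1|+|z_2|)$ --- so the citation of the Lipschitz Feynman--Kac for $u_n$ must, as in Corollary \ref{PDEnian}, again go through the uniform bound on $Z^{n,t,x}$; and the equicontinuity/uniform convergence of $u_n$ should be justified by Dini from continuity of $u$ rather than by invoking Proposition \ref{PDEzuixiaojielianxu}-type estimates, since those are proved under (A5), which you do not have here. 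In short, your proof buys a template that generalizes beyond (A4) (it is essentially the proof of \autoref{21.10.5}), while the paper's one-line reduction buys brevity in the case where (A4) makes the problem effectively Lipschitz.
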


\begin{corollary}\label{PDEnian}
Under Assumptions (A1)-(A3), for any $(t,x)\in[0,T)\times \IR^m$ and sufficiently large $n \in  \dbN$, we have that
\begin{equation}
\label{u-n}
u^n (t,x) \deq Y^{n,t,x}_t,
\end{equation}
where $Y^{n,t,x}_s$ is the solution of BSDE \eqref{BSDEjieduan}, is a viscosity solution of the following PDE:
\begin{equation}\label{PDEjieduan}\left\{\begin{aligned}
\ds &\frac{\partial u^n}{\partial t} \left( t,x\right) + \mathcal{L}_{t,x} u^n \left(t,x \right)+
  f_n \left(t,u^n \left(t,x \right),\left(\sigma^T \nabla u^n \right) \left(t,x \right)\right)=0, \\
\ns\ds &u^n \left(T,x \right) = g(x),
\end{aligned}\right.
\end{equation}
where $f_n(t,y,z)$ is defined by (\ref{f-n}).
\end{corollary}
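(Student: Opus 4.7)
The plan is to recognize Corollary \ref{PDEnian} as an immediate application of Lemma \ref{BSDEgai} to the approximating BSDE \eqref{BSDEjieduan}, with $f$ replaced by $f_n$ and with Assumption (A4) satisfied by $f_n$ (with the constant $L_z$ taken to be $n$). Thus the whole task reduces to checking that the family of generators $\{f_n\}_{n\ges 2L}$ produced by the inf-convolution \eqref{f-n} inherits Assumptions (A2)--(A4) from $f$, and then quoting Lemma \ref{BSDEgai} at fixed $n$.

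First I would verify the growth and Lipschitz-in-$y$ conditions of (A3): by Lemma \ref{BSDEfbijin}(1) we have $|f_n(t,y,z)|\les L(1+|y|+2|z|^2)$, so $f_n$ has quadratic growth in $z$ with constant at most $2L$, uniformly in $n$; by Lemma \ref{BSDEfbijin}(4) the map $y\mapsto f_n(t,y,z)$ is Lipschitz with the same constant $L$ as $f$. Next I would verify (A4) for $f_n$: Lemma \ref{BSDEfbijin}(4) gives
\[
|f_n(t,y,z_1)-f_n(t,y,z_2)|\les n(1+|z_1|+|z_2|)|z_1-z_2|,
\]
which is exactly the shape of (A4) with $L_z:=n$. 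For (A2), continuity in $(y,z)$ is immediate from the local-Lipschitz estimate in Lemma \ref{BSDEfbijin}(4); continuity in $t$ follows from continuity of $f$ in $t$ and the uniform estimates on the infimum defining $f_n$ (a standard argument in Moreau--Yosida regularization). The coefficients $b,\sigma,g$ are unchanged, so (A1) and the conditions on $g$ in (A2)--(A3) continue to hold.

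With these verifications in hand, the FBSDE \eqref{SDEjieduan}--\eqref{BSDEjieduan} fits exactly into the framework of Lemma \ref{BSDEgai}: its generator $f_n$ satisfies (A2)--(A4), and its terminal data $g(X^{t,x}_T)$ comes from the SDE satisfying (A1). Applying Lemma \ref{BSDEgai} with $f$ replaced by $f_n$ then yields that the function $u^n(t,x)=Y^{n,t,x}_t$ defined in \eqref{u-n} is a viscosity solution of PDE \eqref{PDEjieduan}, which is the desired conclusion.

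The only mildly delicate point I anticipate is the continuity of $f_n$ in $t$ (needed for (A2)); however, since $f$ itself is continuous in $t$ and the inf-convolution in \eqref{f-n} can be restricted to a compact ball in $v$ as in the proof of Lemma \ref{BSDEfbijin}, this reduces to the uniform continuity of $f$ on compacts and causes no real difficulty. Every other ingredient is purely a dictionary translation between items of Lemma \ref{BSDEfbijin} and Assumptions (A2)--(A4).
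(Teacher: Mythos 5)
Your proposal is correct and follows exactly the route the paper intends: the paper states the corollary as an immediate consequence of Lemma \ref{BSDEgai}, the point being precisely that $f_n$ inherits (A2)--(A4) (with $L_z=n$) from the properties established in Lemma \ref{BSDEfbijin}. Your explicit verification of those hypotheses, including the minor continuity-in-$t$ point, is a faithful and slightly more careful rendering of the same argument.
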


\ms

Obviously, from \autoref{BSDEbijin}, $u^n (t,x)$ defined in (\ref{u-n}) increasingly converges to $\underline{u} (t,x)$, where $\underline{u} (t,x)$ is defined by
\begin{equation}
\label{u-}
  \ul{u}(t,x) \deq \uly_t^{t,x},
\end{equation}
with $( \uly^{t,x},\ulz^{t,x})$ is the solution of BSDE \eqref{BSDE}.

\ms

In the following proposition, we indicate that $u^n (t,x)$ converges to $\ul{u}(t,x)$ uniformly in any compact subset of their domains.
\begin{proposition}\label{PDEzuixiaojielianxu}
Under Assumptions (A1)-(A3) and (A5), then the sequence $u^n (t,x)$ defined in (\ref{u-n}) converges to $\ul{u}(t,x)$ defined in (\ref{u-}) uniformly in every compact subset of $[0,T]\times \IR^m$, as $n \rightarrow \infty$.
\end{proposition}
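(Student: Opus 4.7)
The plan is to combine the uniform-in-$n$ regularity of $u^n$ given by \autoref{BSDEguji} with the pointwise monotone convergence $u^n\ua \ul u$ coming from \autoref{BSDEbijin}, and then to invoke Dini's theorem.

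First, I would verify that both $u^n(t,x)=Y^{n,t,x}_t$ and $\ul u(t,x)=\ul Y^{t,x}_t$ are deterministic real numbers. Indeed, $X^{t,x}$ on $[t,T]$, and therefore $(Y^{n,t,x},Z^{n,t,x})$ on $[t,T]$, is adapted to the filtration generated by the increments $\{W_s-W_t:s\in[t,T]\}$; evaluating at $s=t$ (equivalently, using the convention $Y^{n,t,x}_s\equiv Y^{n,t,x}_t$ for $s\les t$ so that the process is $\sF_0$-measurable at $s=0$) one sees that $u^n(t,x)$ is a constant, and similarly for $\ul u(t,x)$.

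Second, I would extract a uniform modulus of continuity. By \autoref{BSDEguji}, for every $p>p_0$ there is a constant $C_p$ independent of $n$ such that
\begin{equation*}
|u^n(t,x)-u^n(t',x')|^p \;\les\; \bigl\|Y^{n,t,x}-Y^{n,t',x'}\bigr\|_{S^p_\dbF(0,T;\dbR)}^p \;\les\; C_p\bigl(|x-x'|^p+(1+|x|^p+|x'|^p)|t-t'|^{p/2}\bigr),
\end{equation*}
where the first inequality uses that $u^n(t,x)-u^n(t',x')$ is deterministic and dominated by $\sup_{s\in[0,T]}|Y^{n,t,x}_s-Y^{n,t',x'}_s|$ in any $L^p$ norm. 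Therefore the family $\{u^n\}_{n\ges N_0}$ is equicontinuous on every compact subset $K\subset[0,T]\times\IR^m$ with a modulus independent of $n$. From \autoref{BSDEbijin}, $Y^{n,t,x}\to \ul Y^{t,x}$ in $S^2_\dbF(0,T;\dbR)$, which at the (deterministic) initial section $s=t$ yields pointwise convergence $u^n(t,x)\to \ul u(t,x)$; moreover $u^n$ is increasing in $n$. Passing to the limit in the above inequality gives the same modulus of continuity for $\ul u$, so $\ul u$ is continuous on $[0,T]\times\IR^m$.

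Finally, on any compact $K\subset[0,T]\times\IR^m$ the functions $u^n$ are continuous, increasing in $n$, and converge pointwise to the continuous function $\ul u$; Dini's theorem then yields uniform convergence $u^n\to \ul u$ on $K$, which is exactly the claim. The only delicate point is the transfer from the stochastic $S^p$-estimate of \autoref{BSDEguji} to a deterministic modulus for $u^n$, and this is immediate once one records that $u^n(t,x)$ is a constant; everything else is a routine application of equicontinuity and Dini.
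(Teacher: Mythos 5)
Your overall strategy is the paper's: establish a modulus of continuity for $u^n$ that is uniform in $n$, combine it with the monotone pointwise convergence $u^n \uparrow \ul{u}$ from \autoref{BSDEbijin}, and conclude uniform convergence on compacts (you invoke Dini where the paper uses Arzel\`a--Ascoli plus monotonicity; that substitution is harmless and arguably cleaner). The observation that $u^n(t,x)=Y^{n,t,x}_t$ is deterministic is also correct and is used implicitly in the paper.

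The gap is in your first inequality,
\begin{equation*}
|u^n(t,x)-u^n(t',x')|^p \;\les\; \bigl\|Y^{n,t,x}-Y^{n,t',x'}\bigr\|_{S^p_\dbF(0,T;\dbR)}^p .
\end{equation*}
When $t\neq t'$ the left-hand side is $|Y^{n,t,x}_t-Y^{n,t',x'}_{t'}|^p$, a comparison of the two processes at \emph{different} times, while the $S^p$ norm controls $\sup_s|Y^{n,t,x}_s-Y^{n,t',x'}_s|$, i.e.\ differences at a \emph{common} time. Your appeal to the constancy convention $Y^{n,t,x}_s\equiv Y^{n,t,x}_t$ for $s\les t$ makes the inequality formally consistent with the statement of \autoref{BSDEguji}, but the proof of \autoref{BSDEguji} (linearization, Girsanov, reverse H\"older, Doob) only establishes the pointwise bound for $s\ges t\vee t'$; it does not control $Y^{n,t,x}_{t\vee t'}-Y^{n,t',x'}_{s}$ for $s<t\vee t'$, which is exactly the time-increment of a single solution. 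The paper therefore splits
\begin{equation*}
|Y_t^{n,t,x}-Y_{t'}^{n,t',x'}| \;\les\; \bigl|\dbE[Y_t^{n,t,x}-Y_{t}^{n,t',x'}]\bigr| + \bigl|\dbE[Y_t^{n,t',x'}-Y_{t'}^{n,t',x'}]\bigr|,
\end{equation*}
bounds the first term by \autoref{BSDEguji} (same evaluation time $t$), and bounds the second by writing $Y_t^{n,t',x'}-Y_{t'}^{n,t',x'}=-\int_{t'}^{t}f_n(r,Y_r^{n,t',x'},Z_r^{n,t',x'})\,dr+\int_{t'}^{t}Z_r^{n,t',x'}\,dW_r$, taking expectations, and using the growth bound $|f_n|\les L(1+|y|+2|z|^2)$ from \autoref{BSDEfbijin} together with the $n$-independent bound $M$ on $(Y^{n},Z^{n})$ from \autoref{BSDEbijin}, which yields $L(1+M+2M^2)|t-t'|$. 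This second estimate is what your proof is missing; without it the equicontinuity in the time variable is not justified. Once it is inserted, your Dini argument goes through and gives the stated conclusion.
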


\begin{proof}
Take $(t,x)\in[0,T)\times \IR^m$, for any sufficiently large $n \in  \dbN$, we consider the decoupled FBSDE \eqref{SDEjieduan}-\eqref{BSDEjieduan}
and denote by $(Y^{F^1,t,x},Z^{F^1,t,x})$ and $(Y^{F^2,t,x},Z^{F^2,t,x})$ the solutions of two BSDEs with the generator
$$F^1(r,y,z)= L(1+|y|+2|z|^2), \q~ F^2(r,y,z)= - L(1+|y|+2|z|^2),$$
respectively. By \autoref{BSDEfbijin} and the comparison theorem (Theorem 7.3.1 in Zhang \cite{ref26}), we get
$$ Y^{F^2,t,x} \les Y^{n,t,x} \les Y^{F^1,t,x}.$$
Furthermore, we have that $ w^2 (t,x) \les u^n (t,x) \les w^1 (t,x) $, where $w^i (t,x)\deq Y^{F^i,t,x}_t$ is the viscosity solution of the following PDE
\begin{equation*}\left\{\begin{aligned}
\ds   &\frac{\partial w^i}{\partial t} \left( t,x\right) + \mathcal{L}_{t,x} w^i \left(t,x \right)+
  F^i \left(t,w^i \left(t,x \right),\left(\sigma^T \nabla w^i \right) \left(t,x \right)\right)=0, \\
\ns\ds  &w^i \left(T,x \right) = g(x),
\end{aligned}\right.
\end{equation*}
respectively for $i=1,2$.
So for any compact set $\mathcal{C}$ of $[0,T]\times \IR^m$, the sequence $u^n $ defined in (\ref{u-n}) is uniformly bounded on $\mathcal{C}$.

\ms

Moreover, we indicate that $u^n $ is also uniform H\"{o}lder estimate on $\mathcal{C}$. Indeed, we have the following uniform estimate on $\mathcal{C}$: for $t \ges t'$,
\begin{align}
\label{u-u'}
\ds  |u^n (t,x) - u^n (t',x')| & =|Y_t^{n,t,x}-Y_{t'}^{n,t',x'}| = \Big| \mathbb{E} \[ Y_t^{n,t,x} - Y_{t'}^{n,t',x'} \] \Big|  \nonumber \\
\ns\ds     & \les \Big|\mathbb{E} \[ Y_t^{n,t,x}-Y_{t}^{n,t',x'} \] \Big| + \Big| \mathbb{E} \[ Y_t^{n,t',x'}-Y_{t'}^{n,t',x'} \] \Big|
\end{align}
For the first term in the right-hand side  of (\ref{u-u'}), from the priori estimate in  \autoref{BSDEguji}, we have
\begin{align}
\label{inequality}
\Big|\mathbb{E} \[ Y_t^{n,t,x}-Y_{t}^{n,t',x'} \] \Big| & \les \mathbb{E} \[ |Y_t^{n,t,x}-Y_{t}^{n,t',x'}| \]
\les \|Y_t^{n,t,x}-Y_{t}^{n,t',x'}\|_{S_\dbF^2(0,T;\dbR)}  \nonumber \\
            & \les C_2  [|x-x'| + (1+|x|+|x'|)|t-t'|^\frac{1}{2}],
\end{align}
where the constant $C_2$ does not depend on $n$.
For the second term in the right hand side of (\ref{u-u'}), we consider the following BSDE:
\begin{equation*}
Y_{t'}^{n,t',x'} = Y_t^{n,t',x'} + \int^t_{t'} f_n(r,Y_r^{n,t',x'},Z_r^{n,t',x'})dr - \int^t_{t'} Z_r^{n,t',x'} dW_r.
\end{equation*}
By the growth condition of $f_n$ as in \autoref{BSDEfbijin}, we deduce that
\begin{align*}
\ds \Big| \mathbb{E} \[ Y_t^{n,t',x'}-Y_{t'}^{n,t',x'} \]\Big| & = \Big| \mathbb{E} \[ \int^t_{t'} f_n(r,Y_r^{n,t',x'},Z_r^{n,t',x'})dr \] \Big|  \\
\ns\ds            & \les \mathbb{E} \[ \int^t_{t'}  |f_n(r,Y_r^{n,t',x'},Z_r^{n,t',x'})|dr \] \\
\ns\ds            & \les \mathbb{E} \[ \int^t_{t'}  L(1 + |Y_r^{n,t',x'}| + 2|Z_r^{n,t',x'}|^2) dr \].
\end{align*}
Besides, \autoref{BSDEbijin} implies that $Y^{n,t',x'}$ and $Z^{n,t',x'}$  are bounded by $M$, so that we have
\begin{align*}
\ds \Big| \mathbb{E} \[ Y_t^{n,t',x'}-Y_{t'}^{n,t',x'} \] \Big|
& \les \mathbb{E} \[ \int^t_{t'}  L(1 + |Y_r^{n,t',x'}| + 2|Z_r^{n,t',x'}|^2) dr \]  \\
\ns\ds & \les L(1+M+2M^2)|t-t'|.
\end{align*}
Combining the inequality (\ref{inequality}) with the previous one, we easily derive that, $u^n $ admits the following uniform H\"{o}lder estimate on the compact set $\mathcal{C} \subseteq [0,T]\times \IR^m$,
\begin{align*}
\ds  |u^n (t,x) - u^n (t',x')| & =|Y_t^{n,t,x}-Y_{t'}^{n,t',x'}| \\
\ns\ds     & \les \big|\mathbb{E} \[ Y_t^{n,t,x}-Y_{t}^{n,t',x'} \] \big| + \big| \mathbb{E} \[ Y_t^{n,t',x'}-Y_{t'}^{n,t',x'} \] \big| \\
 \ns\ds    & \les C  [|x-x'| + (1+|x|+|x'|)|t-t'|^\frac{1}{2}],
\end{align*}
where the constant $C$ is independent of the parameters $n$, $t$, $t'$, $x$ and $x'$.

\ms

Finally, applying the Arzel\'{a}-Ascoli theorem on the compact set $\mathcal{C}$ to extract a subsequence of $\{ u^n \}$ which converges uniformly. Besides, noting $u^n$ increasingly converges to $\underline{u}$, we have $u^n $ uniformly converges to $\underline{u}$ on $\mathcal{C}$.
\end{proof}

\ms

Now, before present the main result of this section, we introduce the stability property of viscosity solutions below, whose proof can be fined in Lemma 6.2 of Fleming and Soner \cite{ref19}.

\begin{lemma}[Stability]\label{Stability}
Let $u^n$ be a viscosity subsolution (resp. supersolution) to the following PDE
\begin{equation*}
\frac{\partial u^{n}}{\partial t} \left( t,x\right) + H_n(t,x,u^n(t,x), \nabla u^n(t,x), D^2_{x}u^n(t,x) )=0,\quad (t,x)\in [0,T)\times \mathbb{R}^m,
\end{equation*}
where $H_n(t,x,r,p,A):[0,T]\times \mathbb{R}^m  \times \mathbb{R} \times \mathbb{R}^m \times \mathbb{S}^{m\times m} \rightarrow \mathbb{R}$ is continuous
%with respect to {\color{red}(what variable???)}
and satisfies the ellipticity condition
\begin{equation*}
H_n(t,x,r,p,A) \les H_n(t,x,r,p,B),  \q~ \text{whenever }~A \les B.
\end{equation*}
Assume that $H_n$ and $u^n$ converge to $H$ and $u$, respectively, uniformly in every compact subset of their own domains. Then $u$ is a viscosity subsolution (resp. supersolution)  of the limit equation
\begin{equation*}
\frac{\partial u}{\partial t} \left( t,x\right)+H(t,x,u(t,x), \nabla u(t,x), D^2_{x}u(t,x) )=0,
\end{equation*}
where $D^2_{x} u$ denotes the Hessian matrix of $u$ in spatial variable $x$.
\end{lemma}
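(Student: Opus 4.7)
The plan is to follow the classical perturbation-plus-compactness argument from the Crandall--Ishii--Lions theory of viscosity solutions; I will write out the subsolution case since the supersolution case is entirely symmetric. Fix a test function $\varphi \in C^{1,2}([0,T)\times \mathbb{R}^m;\mathbb{R})$ and a point $(t_0, x_0) \in [0,T)\times \mathbb{R}^m$ at which $\varphi - u$ attains a local minimum. A standard preliminary reduction replaces $\varphi$ by $\varphi(t,x) + \eps[(t-t_0)^2 + |x-x_0|^2]$, so that $\varphi - u$ has a \emph{strict} local minimum at $(t_0, x_0)$; the plan is to derive the viscosity inequality for each $\eps > 0$ and then let $\eps \to 0$, using continuity of $\partial_t \varphi$, $\nabla \varphi$, and $D^2_x \varphi$.

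I would then select a closed product neighborhood $K = [t_0 - r, t_0 + r] \times \overline{B}_r(x_0) \subset [0,T)\times \mathbb{R}^m$ on which $(t_0, x_0)$ is the unique minimizer of $\varphi - u$. Since $\varphi - u^n$ is continuous on the compact set $K$, it attains its minimum at some $(t_n, x_n) \in K$. Uniform convergence $u^n \to u$ on $K$ gives
\begin{equation*}
(\varphi - u^n)(t_n, x_n) \les (\varphi - u^n)(t_0, x_0) \to (\varphi - u)(t_0, x_0),
\end{equation*}
and combining this with uniform convergence once more shows that every accumulation point $(t^*, x^*)$ of $\{(t_n, x_n)\}$ satisfies $(\varphi - u)(t^*, x^*) \les (\varphi - u)(t_0, x_0)$, and hence equals $(t_0, x_0)$ by strictness of the minimum. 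So $(t_n, x_n) \to (t_0, x_0)$ and, for $n$ large, $(t_n, x_n)$ lies in the interior of $K$ and is a genuine local minimum of $\varphi - u^n$, which triggers the subsolution property of $u^n$ there.

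Applying that property yields
\begin{equation*}
\frac{\partial \varphi}{\partial t}(t_n, x_n) + H_n\bigl(t_n, x_n, u^n(t_n, x_n), \nabla \varphi(t_n, x_n), D^2_x \varphi(t_n, x_n)\bigr) \ges 0,
\end{equation*}
and the final step is to pass to the limit $n \to \infty$: the derivatives of $\varphi$ converge by continuity, $u^n(t_n, x_n) \to u(t_0, x_0)$ by uniform convergence on $K$, and the values of $H_n$ at these convergent arguments tend to the corresponding value of $H$, because $H_n \to H$ uniformly on the compact subset of $[0,T]\times \mathbb{R}^m \times \mathbb{R} \times \mathbb{R}^m \times \mathbb{S}^{m\times m}$ that contains them. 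This produces the required inequality at $(t_0, x_0)$, and removing the $\eps$-perturbation finishes the argument. The only step requiring genuine care is the accumulation-point argument identifying $(t_n, x_n) \to (t_0, x_0)$, which is why the strictness reduction at the outset is essential; the ellipticity hypothesis is not invoked in the stability proof itself, but it is what guarantees that the viscosity framework remains meaningful for the limit PDE.
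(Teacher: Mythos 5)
Your proof is correct: the strict-minimum perturbation, the convergence of the approximate minimizers $(t_n,x_n)\to(t_0,x_0)$ via uniform convergence plus strictness, and the passage to the limit in $H_n$ along convergent arguments constitute the classical stability argument for viscosity solutions. The paper does not prove this lemma at all — it only cites Lemma 6.2 of Fleming and Soner — and your argument is essentially the standard proof found in that reference, so there is nothing to correct.
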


\begin{theorem}\label{21.10.5}
Under Assumptions (A1)-(A3) and (A5), for any $(t,x)\in[0,T)\times \IR^m$, $\ul{u}$ defined in (\ref{u-}) is a viscosity solution of PDE \eqref{PDE}.
\end{theorem}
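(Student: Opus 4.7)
The overall plan is to realize $\ul{u}$ as the limit of the viscosity solutions $u^n$ of the approximating PDEs \eqref{PDEjieduan} and invoke the Stability Lemma (\autoref{Stability}). By \autoref{PDEnian} each $u^n$ is a viscosity solution of \eqref{PDEjieduan}, and by \autoref{PDEzuixiaojielianxu} we already have $u^n \to \ul{u}$ uniformly on every compact subset of $[0,T]\times \IR^m$. What remains is to write \eqref{PDEjieduan} in the Hamiltonian form of \autoref{Stability} and to verify that the associated Hamiltonians $H_n$ satisfy the required ellipticity, continuity, and uniform convergence hypotheses.

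I would set
\[
H_n(t,x,r,p,A) \deq \tfrac{1}{2}\tr\bigl(\sigma(t)\sigma(t)^T A\bigr) + \langle b(t,x), p\rangle + f_n\!\bigl(t,r,\sigma(t)^T p\bigr),
\]
and analogously $H$ with $f$ in place of $f_n$. Ellipticity $H_n(t,x,r,p,A)\les H_n(t,x,r,p,B)$ whenever $A\les B$ is immediate from positivity of $\sigma\sigma^T$. Continuity of $H_n$ follows from continuity of $b,\sigma,f_n$ (the last obtained as an infimum over a suitable compact set of variables, cf.\ \autoref{BSDEfbijin}(4)). For the convergence $H_n\to H$ it suffices to show $f_n\to f$ uniformly on compact subsets of $[0,T]\times\IR\times\IR^d$. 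Here I would combine: (i) pointwise convergence $f_n(t,y,z)\to f(t,y,z)$, which follows from item (3) of \autoref{BSDEfbijin} with the constant sequence $z_n\equiv z$; (ii) monotone increase in $n$ from item (2); and (iii) continuity of $f$ (given) and of each $f_n$ (from item (4)). Dini's theorem then yields uniform convergence on any compact set.

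With these ingredients in hand, \autoref{Stability} applies on each compact subset of $[0,T)\times\IR^m$, delivering that $\ul{u}$ is simultaneously a viscosity subsolution and supersolution of the limit equation
\[
\frac{\partial u}{\partial t}(t,x) + \mathcal{L}_{t,x}u(t,x) + f\bigl(t,u(t,x),(\sigma^T\nabla u)(t,x)\bigr)=0.
\]
The terminal condition $\ul{u}(T,x)=g(x)$ is inherited from $u^n(T,x)=g(x)$ by pointwise passage to the limit (or directly from $\uly_T^{T,x}=g(X_T^{T,x})=g(x)$ in the definition \eqref{u-}). Since $\ul{u}$ is continuous (being the uniform limit on compacts of continuous functions $u^n$), it qualifies as a viscosity solution of \eqref{PDE}.

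The step I expect to be most delicate is the uniform convergence of $f_n$ to $f$ on compact sets; although the monotone-plus-continuous Dini argument is standard, one must be careful that the pointwise convergence given by \autoref{BSDEfbijin}(3) is in fact the stronger statement we need, and that the continuity of the limit $f$ (not just of each $f_n$) is what allows Dini's theorem to be applied. Everything else in the proof is routine bookkeeping once the Hamiltonian is written out and the convergence of generators is secured.
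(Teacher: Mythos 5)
Your proposal is correct and follows essentially the same route as the paper: both realize $\ul{u}$ as the uniform-on-compacts limit (via \autoref{PDEzuixiaojielianxu}) of the viscosity solutions $u^n$ of \eqref{PDEjieduan}, obtain uniform convergence of $f_n$ to $f$ on compacts by combining the monotonicity and pointwise convergence from \autoref{BSDEfbijin} with Dini's theorem, and conclude with the stability result of \autoref{Stability}. Your write-up is in fact more explicit than the paper's (spelling out the Hamiltonian, the ellipticity check, and the terminal condition), but no new idea is involved.
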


\begin{proof}
On the one hand, by \autoref{BSDEfbijin}, we already have that $f_n(t,y,z)$ increasingly converges to the continuous function $f(t,y,z)$ as $n\to\infty$, and moreover, combining  Dini's theorem, $f_n(t,y,z)\rightarrow f(t,y,z)$ uniformly converges in compact subset of their domains.
On the other hand, for PDE \eqref{PDEjieduan}, observe that the infinitesimal semi-group generator $\mathcal{L}_{t,x}$ satisfies the ellipticity condition, so by \autoref{PDEzuixiaojielianxu}, we have that $u_n(t,x)$ uniformly converges to $\ul{u} (t,x)$ on the compact subset of their domains.
Finally, we apply the \autoref{Stability} to prove that $\ul{u}$ is a viscosity solution to the PDE \eqref{PDE}.
\end{proof}

\section{Large deviation principle}

\ms

Based on the above bedding, in this section, we consider the large deviation principle of the solution of backward stochastic differential equations with quadratic growth. Before going further, we consider the small perturbation of forward and backward stochastic differential equations.

\subsection{SDEs perturbation and some results}

\ms

Consider the perturbations of the following forward stochastic differential equation:
\begin{equation}\label{FBSDE:SDE}
   X^{\varepsilon,t,x}_{s} = x +\int_t^s b\left(r, X^{\varepsilon,t,x}_{r} \right) dr + \varepsilon \int_t^s
  \sigma(r) dW_r, \q~ 0 \les t \les s \les T, \q~ \varepsilon > 0;
\end{equation}
and backward stochastic differential equation:
\begin{equation}\label{FBSDE:BSDE}
  Y^{\varepsilon ,t,x}_s =  g\left( X^{\varepsilon,t,x}_T \right) + \int_s^T f\left( r,
  Y^{\varepsilon ,t,x}_r,Z^{\varepsilon ,t,x}_r \right) dr - \int_s^T Z^{\varepsilon ,t,x}_r dW_r,\q~
  0 \les t \les s \les T .
\end{equation}
We assume that the coefficients $b$ and $\sigma$ of SDE \rf{FBSDE:SDE} satisfy Assumption (A1), and $f$ and $g$ of BSDE \rf{FBSDE:BSDE} satisfy Assumptions (A2), (A3) and (A5), unless otherwise stated.

\ms

First, we would like to introduce two definitions of the large deviation, which comes from Dembo and Zeitouni \cite{ref20}.

\begin{definition} \sl
Let $E$ be a topological space.
\begin{enumerate}
  \item A rate function is a lower semicontinuous mapping ${ I}:E\rightarrow \left[ 0,+\infty \right]$, i.e., for all $a \ges 0$, the
  level set $C_I\left(a\right) =\left\{ x\in E:I\left( x\right) \les a\right\}$ is a closed subset of $E$.
  \item A good rate function is a rate function for which all the level sets $C_I\left(a\right)$ are compact subsets of $E$.
\end{enumerate}
\end{definition}
\begin{definition} \sl
The family of processes $(Y^{\varepsilon}_t)_{t\in[0,T]}$ depending on a parameter $\varepsilon$ is said to satisfy a large deviations principle with rate function $ I$ if the following conditions hold for every Borel set $A\subset C([0,T],\IR)$:
\begin{align*}
\limsup_{\varepsilon\to0} \varepsilon^2 \log\IP[Y^\varepsilon\in A]\les&~ - \inf_{\phi\in \text{Cl}(A)}  I(\phi),\\
\liminf_{\varepsilon\to0} \varepsilon^2 \log\IP[Y^\varepsilon\in A]\ges&~ - \inf_{\phi\in \text{Int}(A)}  I(\phi),
\end{align*}
where Cl\ \!$(A)$ is the closure of the set $A$ and Int\ \!$(A)$ is the interior of the set $A$.
\end{definition}

\ms

The following lemma is the well-known Freidlin-Wentzell theory, whose proof can be found in Freidlin and Wentzell \cite{ref2} or Boue and Dupuis \cite{ref29}.
\begin{lemma}
\label{Freidlin-Wentzell}
Let Assumption (A1) hold. For $(t,x)\in[0,T)\times \IR^m$,  when $\varepsilon$ goes to $0$, the solution $X^{\varepsilon,t,x}$ of SDE (\ref{FBSDE:SDE}) satisfies a large deviation principle in the space $C\left([t,T];\IR^m\right)$ associated to the rate function $I_x$, where for any function $\phi \in C\left([t,T];\IR^m\right)$,
\begin{align*}
I_x(\phi) \deq&~ \inf\Big\{ \frac{1}{2}\int_t^T |\dot{v}_r|^2 dr: \  v \in H([t,T];\IR^d)\  \text{such that } \\
&~ \phi_s =x+\int_t^s b(r,\phi_r)dr + \int_t^s \sigma(r) \dot{v}_r dr,\q~ \forall s \in [t,T]\Big\},
\end{align*}
with the convention that $\inf \varnothing =+\infty$.
\end{lemma}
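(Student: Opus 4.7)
The plan is to obtain the LDP for $X^{\varepsilon,t,x}$ by combining Schilder's theorem for the scaled Brownian motion $\varepsilon W$ with the contraction principle, using an exponential approximation argument to bridge between the (non-continuous) It\^{o} map and the (continuous) skeleton map.

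First I would invoke Schilder's theorem: the family $\{\varepsilon W\}_{\varepsilon>0}$ satisfies a large deviation principle in $C([t,T];\IR^d)$ with good rate function $I_W(v)=\frac{1}{2}\int_t^T|\dot v_r|^2\,dr$ if $v\in H([t,T];\IR^d)$ with $v_t=0$, and $I_W(v)=+\infty$ otherwise. Next, introduce the skeleton map $\Psi:H([t,T];\IR^d)\to C([t,T];\IR^m)$ by $\Psi(v)=\phi$, where $\phi$ solves the controlled ODE
$$
\phi_s=x+\int_t^s b(r,\phi_r)\,dr+\int_t^s\sigma(r)\dot v_r\,dr,\q s\in[t,T].
$$
Existence, uniqueness, and continuity of $\Psi$ (from the Cameron--Martin norm on $H$ to the uniform norm on $C([t,T];\IR^m)$) follow from Assumption (A1): the Lipschitz hypothesis on $b$ and boundedness of $\sigma$ make Picard iteration plus Gronwall's lemma go through.

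The main step is to establish an exponential equivalence in the sense of Dembo--Zeitouni. For a partition of $[t,T]$ of mesh $1/n$, let $W^n$ denote the polygonal interpolation of $W$; then $\Psi(\varepsilon W^n)$ is a genuine continuous functional of $\varepsilon W^n$. The contraction principle, applied to the LDP for $\varepsilon W^n$ (inherited from Schilder via continuity of the interpolation map), yields an LDP for $\Psi(\varepsilon W^n)$ with rate function $I_x^n(\phi)=\inf\{I_W(v):\Psi(v\text{ on the partition})=\phi\}$. I would then prove
$$
\lim_{n\to\infty}\limsup_{\varepsilon\to 0}\varepsilon^{2}\log\IP\bigl(\rho_{[t,T]}(X^{\varepsilon,t,x}-\Psi(\varepsilon W^n))>\delta\bigr)=-\infty,\q \forall\,\delta>0,
$$
by estimating $\sup_{s}|\int_t^s\sigma(r)\,d(W_r-W^n_r)|$ with Bernstein's exponential martingale inequality and absorbing the drift terms via Gronwall. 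Together with a Gamma-convergence check of $I_x^n\to I_x$, this transfers the LDP from $\Psi(\varepsilon W^n)$ to $X^{\varepsilon,t,x}$, and the limiting rate function reduces, via the contraction principle $I_x(\phi)=\inf\{I_W(v):\Psi(v)=\phi\}$, to precisely the variational formula in the statement.

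The principal obstacle is the exponential equivalence estimate above, because the sup-norm of the stochastic integral against $W-W^n$ must be controlled on the $\varepsilon^2\log$-scale, which is much sharper than the usual $L^2$ bounds supplied by the It\^{o} isometry; the classical route is Bernstein's inequality combined with a pathwise Gronwall comparison. An alternative that sidesteps polygonal approximation is the Budhiraja--Dupuis weak convergence approach based on the Bou\'{e}--Dupuis variational representation, in which one instead verifies that for any $L^2$-bounded family of controls $u^\varepsilon$ converging weakly to $u$, the controlled processes $X^{\varepsilon,t,x}$ driven by $\varepsilon W+\int_t^\cdot u^\varepsilon_r\,dr$ converge in law to $\Psi(v)$ with $\dot v=u$; this is well matched to Assumption (A1) and is the strategy used in Bou\'{e}--Dupuis \cite{ref29}.
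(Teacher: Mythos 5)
The paper does not actually prove this lemma: it is quoted as a known result, with the proof delegated to Freidlin--Wentzell \cite{ref2} and Bou\'{e}--Dupuis \cite{ref29}. Your outline is a correct reconstruction of the classical argument behind the first of those citations (Schilder's theorem, the skeleton map, exponentially good polygonal approximations, and the extended contraction principle), and the alternative you mention at the end is exactly the content of the second citation, so nothing is missing relative to what the paper relies on. One remark worth making, though: the full exponential-approximation machinery is only needed when the diffusion coefficient depends on the state, whereas here $\sigma=\sigma(r)$ depends on time only. Writing $\eta_s\deq\int_t^s\sigma(r)\,dW_r$, the equation reads $X^{\varepsilon,t,x}_s=x+\int_t^s b(r,X^{\varepsilon,t,x}_r)\,dr+\varepsilon\eta_s$, and the map $\eta\mapsto X$ is a genuinely continuous map of $C([t,T];\IR^m)$ into itself by the Lipschitz bound on $b$ and Gronwall's lemma. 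Since $\varepsilon\eta$ is a scaled Gaussian process satisfying an LDP with rate function $\inf\{\frac{1}{2}\int_t^T|\dot v_r|^2\,dr:\ \eta_\cdot=\int_t^\cdot\sigma(r)\dot v_r\,dr\}$ (generalized Schilder), the ordinary contraction principle applies directly and yields the stated $I_x$ without any Bernstein-type exponential equivalence estimate or $\Gamma$-convergence check of $I^n_x\to I_x$. Your route is valid and more robust (it would survive $\sigma=\sigma(r,x)$), but for the additive-noise setting of Assumption (A1) it is considerably longer than necessary; the step you flag as the principal obstacle can simply be bypassed.
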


\ms

Thanks to the Freidlin-Wentzell theory, we known that, when $\varepsilon$ goes to $0$, the solution $X^{\varepsilon,t,x}$ of SDE (\ref{FBSDE:SDE}) converges to the solution $\varphi^{t,x}$ of the following forward ordinary differential  equations (ODEs):
\begin{equation}\label{ODE}
\left\{\begin{aligned}
  \dot{\varphi}_s =&~ b\left( s, \varphi_s \right),\q~0 \les t \les s \les T, \\
  \varphi_t =&~ x,
\end{aligned}\right.\end{equation}
which satisfies a large deviation principle too.
Next, in order to establish some estimates of $X^{\varepsilon,t,x}$, we consider the following equation:
\begin{equation}
\label{FBSDE:SDE'}
  X^{\varepsilon',t',x'}_{s}  = x' +\int_{t'}^s b\left(r, X^{\varepsilon',t',x'}_{r} \right) dr + \varepsilon' \int_{t'}^s
  \sigma(r) dW_r, \q 0 \les t' \les s \les T, \q \varepsilon' > 0.
\end{equation}

\begin{proposition}\label{SDE'shedongguji}
Assume that Assumption (A1) holds. For any $(t,x)$, $(t',x') \in[0,T) \times \IR^m$ and $\varepsilon$, $\varepsilon' \in[0,1]$, there exists a unique solution $X^{\varepsilon,t,x}$, $X^{\varepsilon',t',x'} \in S_\dbF^2(0,T;\dbR^m) $ for the SDE \eqref{FBSDE:SDE} and \eqref{FBSDE:SDE'}, respectively. Moreover, the Malliavin derivative of $X^{\varepsilon,t,x}$ is bounded and there is a positive constant $C>0$, independent of $t, t', x, x', \varepsilon$ and $\varepsilon' $, such that
\begin{equation*}
{\Bbb E}\left[ \sup\limits_{0 \les s \les T}\left| X^{\varepsilon,t,x}_s -X^{\varepsilon',t',x'}_s \right|^2 \right]
\les  C (|t-t'| + |\varepsilon - \varepsilon'|^2 +|x-x'|^2).
\end{equation*}
\end{proposition}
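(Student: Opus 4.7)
The plan is to treat the three claims (existence/uniqueness, boundedness of the Malliavin derivative, continuous dependence) separately, leaning on the earlier results in the paper.

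First, for existence and uniqueness of $X^{\varepsilon,t,x}$ and $X^{\varepsilon',t',x'}$ in $S_\dbF^2(0,T;\dbR^m)$, I will simply invoke \autoref{SDEguji} applied to the SDEs with diffusion coefficients $\varepsilon\sigma$ and $\varepsilon'\sigma$; since $\varepsilon,\varepsilon'\in[0,1]$ and $|\sigma|\les L$ by (A1), the scaled coefficients still satisfy (A1) with the same constants. For the Malliavin derivative I will mimic the argument in \autoref{BSDEbounded}: since $b$ is Lipschitz in $x$ and $\sigma$ is bounded deterministic, Lemma 4.2 of Cheridito and Nam \cite{ref35} gives that $D_u X^{\varepsilon,t,x}_s$ exists and satisfies the linear SDE
$$D_u X^{\varepsilon,t,x}_s = \varepsilon\,\sigma(u)\mathbf{1}_{[t,s]}(u) + \int_u^s \partial_x b\bigl(r,X^{\varepsilon,t,x}_r\bigr)\, D_u X^{\varepsilon,t,x}_r\, dr,$$
and Gronwall together with $|\partial_x b|,|\sigma|\les L$ and $\varepsilon\les 1$ then yields $|DX^{\varepsilon,t,x}|\les Le^{LT}$, a constant independent of $(\varepsilon,t,x)$.

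For the quantitative $L^2$-bound, assuming without loss of generality $t\les t'$, my plan is to use the triangle decomposition
$$X^{\varepsilon,t,x}_s - X^{\varepsilon',t',x'}_s = \bigl(X^{\varepsilon,t,x}_s - X^{\varepsilon,t',x}_s\bigr) + \bigl(X^{\varepsilon,t',x}_s - X^{\varepsilon,t',x'}_s\bigr) + \bigl(X^{\varepsilon,t',x'}_s - X^{\varepsilon',t',x'}_s\bigr),$$
and to control each piece separately in $S_\dbF^2$. The spatial piece is a classical Gronwall estimate yielding the $|x-x'|^2$ contribution. The time-shift piece reduces to bounding
$$X^{\varepsilon,t,x}_{t'} - x = \int_t^{t'} b(r,X^{\varepsilon,t,x}_r)\,dr + \varepsilon\int_t^{t'}\sigma(r)\,dW_r$$
in $L^2$ by $C|t-t'|$ using $|b|,|\sigma|\les L$ and It\^o's isometry, and then propagating this initial discrepancy to the supremum on $[t',T]$ by Gronwall applied to the Lipschitz drift. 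The $\varepsilon$-perturbation piece uses that $X^{\varepsilon,t',x'}-X^{\varepsilon',t',x'}$ solves a linear-drift SDE forced by $(\varepsilon-\varepsilon')\int_{t'}^{\cdot}\sigma(r)\,dW_r$; Doob's and the BDG inequalities bound this martingale term by $C|\varepsilon-\varepsilon'|^2$ and Gronwall absorbs the drift. Summing the three squared $S_\dbF^2$ estimates via $(a+b+c)^2\les 3(a^2+b^2+c^2)$ then delivers the claim.

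The one subtle point, which I expect to be the main obstacle, is ensuring that the constant $C$ is independent of $|x|$ and $|x'|$. The generic estimate in \autoref{SDEguji} carries a $(1+|x|^p+|x'|^p)$ prefactor in front of the time-shift term, reflecting the pointwise growth of $b$; here this prefactor disappears only because Assumption (A1) imposes the \emph{uniform} bound $|b|\les L$ rather than mere linear growth. I will make this step explicit so that the resulting $C$ depends only on $L$ and $T$, which is exactly what is required for the large deviation arguments in the next subsection.
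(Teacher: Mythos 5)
Your proposal is correct and follows essentially the same route as the paper: the paper likewise reduces to $t\les t'$, obtains boundedness of the Malliavin derivative from Lemma 4.2 of Cheridito and Nam \cite{ref35}, and controls the difference using the uniform bounds $|b|,|\sigma|\les L$, the Lipschitz property of $b$, It\^o's isometry, the Burkholder--Davis--Gundy inequality, and Gronwall's lemma. The only cosmetic difference is organizational: you telescope through the intermediate processes $X^{\varepsilon,t',x}$ and $X^{\varepsilon,t',x'}$ and run three separate Gronwall estimates, whereas the paper decomposes the single difference $X^{\varepsilon,t,x}_s-X^{\varepsilon',t',x'}_s$ into four integral terms and applies Gronwall once; your remark that the uniform bound on $b$ (rather than mere linear growth) is what removes the $(1+|x|^p+|x'|^p)$ prefactor is precisely the point the paper's estimates exploit.
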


\begin{proof}
Without loss of generality, we let $t \les t' \les s$. It is easy to see that under (A1), both SDE \eqref{FBSDE:SDE} and SDE \eqref{FBSDE:SDE'} admit a unique solution. Furthermore, noting that $\varepsilon \in[0,1]$, by Lemma 4.2 of Cheridito and Nam \cite{ref35}, the derivative of $X^{\varepsilon,t,x} $ exists and is bounded by some constant, which does not depend on $\varepsilon$, $t $ and $x$.

\ms

Now we give the priori estimates of the spread between the solutions $X^{\varepsilon,t,x}$ and $X^{\varepsilon',t',x'} $:
\begin{equation}\label{x-x'}
\begin{aligned}
&{\Bbb E}\left[ \sup\limits_{t'\les s\les T} \Big| X^{\varepsilon,t,x}_s -X^{\varepsilon',t',x'}_s \Big|^2 \right]\\
&~  \les 4 {\Bbb E} \bigg[ \Big|\int_t^{t'} b\left(r, X^{\varepsilon,t,x}_{r} \right) dr + \varepsilon \int_t^{t'} \sigma(r) dW_r \Big|^2
+   \sup\limits_{t'\les s\les T}\Big|\int_{t'}^s [b(r, X^{\varepsilon,t,x}_{r})  - b(r, X^{\varepsilon',t',x'}_{r})] dr\Big|^2   \\
&\q\ +   \sup\limits_{t'\les s\les T}\Big|\varepsilon \int_{t'}^s \sigma(r) dW_r - \varepsilon' \int_{t'}^s
\sigma(r) dW_r \Big|^2   +  |x-x'|^2  \bigg].
\end{aligned}
\end{equation}
For the first term in the right hand side of (\ref{x-x'}), note that $b$ and $\sigma$ are bounded by $L$ and Ito's isometry, we have
\begin{equation}\label{inequality1}
\begin{aligned}
\ds &  {\Bbb E}\bigg[ \Big|\int_t^{t'} b\left(r, X^{\varepsilon,t,x}_{r} \right) dr + \varepsilon \int_t^{t'} \sigma(r) dW_r \Big|^2 \bigg]\\
\ns\ds & \les 2 {\Bbb E} \left[ \Big|\int_t^{t'} b\left(r, X^{\varepsilon,t,x}_{r} \right) dr\Big|^2 \right]
  + 2 {\Bbb E}\left[ \Big |\varepsilon \int_t^{t'} \sigma(r) dW_r \Big|^2 \right] \\
\ns\ds    & \les 2TL^2|t'-t|   + 2 {\Bbb E} \int_t^{t'} |\varepsilon \sigma(r)|^2 dr  \\
\ns\ds    & \les   2TL^2|t'-t|   + 2T \varepsilon^2 L^2 |t'-t|.
\end{aligned}
\end{equation}
For the second term in the right hand side of (\ref{x-x'}), using the Lipschitz condition of $b$ and H\"{o}lder inequality, we have
\begin{equation}\label{inequality2}
\begin{aligned}
&{\Bbb E}\left[ \sup\limits_{t'\les s\les T}\Big|\int_{t'}^s \big(b(r, X^{\varepsilon,t,x}_{r})  - b(r, X^{\varepsilon',t',x'}_{r})\big) dr\Big|^2 \right]   \\
& \les T {\Bbb E} \int_{t'}^T L^2 \big |X^{\varepsilon,t,x}_{r}-X^{\varepsilon',t',x'}_{r} \big |^2 dr   \\
&  \les TL^2 \int_{t'}^T    {\Bbb E}\Big[ \sup\limits_{r \les s\les T} \big | X^{\varepsilon,t,x}_s -X^{\varepsilon',t',x'}_s \big |^2\Big] dr.
\end{aligned}
\end{equation}
For the third term in the right hand side of (\ref{x-x'}), using Assumption (A1), the Burkholder-Davis-Gundy inequality, we have
\begin{equation}\label{inequality3}
\begin{aligned}
& {\Bbb E}  \left[ \sup\limits_{t'\les s\les T}  \big |\varepsilon \int_{t'}^s \sigma(r) dW_r - \varepsilon' \int_{t'}^s
\sigma(r) dW_r \big |^2 \right]  \\
& \les C_1 {\Bbb E} \left[\int_{t'}^T \big |\varepsilon \sigma(r)  - \varepsilon' \sigma(r) \big |^2 dr \right]    \\
& \les C_1 TL^2 \big |\varepsilon - \varepsilon' \big|^2.
\end{aligned}
\end{equation}
Combining \eqref{inequality1}, \eqref{inequality2} and \eqref{inequality3}, noting that $0 \les \varepsilon \les 1$, we have
\begin{align*}
&  {\Bbb E}\left[ \sup\limits_{t' \les s \les T}\left| X^{\varepsilon,t,x}_s -X^{\varepsilon',t',x'}_s \right|^2 \right]    \\
&~  \les 4  {\Bbb E} \bigg[ \Big|\int_t^{t'} b\left(r, X^{\varepsilon,t,x}_{r} \right) dr + \varepsilon \int_t^{t'} \sigma(r) dW_r \Big|^2 +   \sup\limits_{t'\les s\les T} \Big|\int_{t'}^s \left[b(r, X^{\varepsilon,t,x}_{r})  - b(r, X^{\varepsilon',t',x'}_{r})\right] dr \Big|^2     \\
&\q\ +   \sup\limits_{t'\les s\les T} \Big |\varepsilon \int_{t'}^s \sigma(r) dW_r - \varepsilon' \int_{t'}^s \sigma(r) dW_r \Big|^2
 +   |x-x'|^2  \bigg]  \\
&~  \les   4 \bigg[ (2TL^2 + 2T \varepsilon^2 L^2) |t'-t| +  C_1 TL^2|\varepsilon - \varepsilon'|^2
+ TL^2  \int_{t'}^T {\Bbb E}  \left[ \sup\limits_{r \les s\les T} | X^{\varepsilon,t,x}_s -X^{\varepsilon',t',x'}_s |^2 \right] dr +|x-x'|^2 \bigg] \\
&~  \les   4\bigg[ |x-x'|^2 +(2TL^2 + 2T  L^2) |t'-t| +  C_1 TL^2|\varepsilon - \varepsilon'|^2
+ TL^2 \int_{t'}^T {\Bbb E}  \left[ \sup\limits_{r \les s\les T} | X^{\varepsilon,t,x}_s -X^{\varepsilon',t',x'}_s |^2 \right] dr
 \bigg].
\end{align*}
Then by the Gronwall inequality, we have
\begin{equation*}
{\Bbb E}\left[ \sup\limits_{t' \les s \les T}\left| X^{\varepsilon,t,x}_s -X^{\varepsilon',t',x'}_s \right|^2 \right] \les
  C (|t-t'| + |\varepsilon - \varepsilon'|^2 +|x-x'|^2).
\end{equation*}
where $C$ does not depend on $t, t', x, x', \varepsilon$ and $\varepsilon' $.
\end{proof}

\ms

Noting that when $\varepsilon = 0$, we have $X^{0,t,x} \equiv \varphi ^{t,x}$, furthermore, by \autoref{SDE'shedongguji}, the following result can be proved.

\begin{proposition}
Under Assumption (A1), take $(t,x)\in[0,T)\times \IR^m$ and for any $\varepsilon\in(0,1]$, the equation \eqref{FBSDE:SDE} and \eqref{ODE} have unique solution, $X^{\varepsilon,t,x} \in S_\dbF^2(0,T;\dbR^m) $ and $\varphi^{t,x}\in C([t,T],\IR^m)$, respectively. Moreover, there exists a constant $C>0$, independent of $t, x$ and $\varepsilon $, such that
\begin{equation*}
{\Bbb E}\left[ \sup\limits_{t \les s \les T}\left| X^{\varepsilon,t,x}_s -\varphi ^{t,x}_s \right|^2 \right]
\les C\varepsilon^2 .
\end{equation*}
\end{proposition}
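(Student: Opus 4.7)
The plan is to recognize this proposition as an immediate specialization of \autoref{SDE'shedongguji}. The key observation is that taking $\varepsilon=0$ in the perturbed SDE \eqref{FBSDE:SDE} kills the stochastic integral pathwise, so the equation reduces to exactly the ODE \eqref{ODE}; by strong uniqueness one therefore has $X^{0,t,x}_s = \varphi^{t,x}_s$ for all $s\in[t,T]$, which is precisely the identification noted in the excerpt just before the statement.

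First I would record existence and uniqueness. Under (A1), $b$ is bounded and Lipschitz in $x$ uniformly in $t$, and $\sigma$ is bounded, so classical It\^o SDE theory yields a unique strong solution $X^{\varepsilon,t,x}\in S_\dbF^2(0,T;\dbR^m)$ of \eqref{FBSDE:SDE} for every $\varepsilon\in(0,1]$. Likewise standard ODE theory (Cauchy--Lipschitz) gives a unique $\varphi^{t,x}\in C([t,T],\dbR^m)$ solving \eqref{ODE}; equivalently, this is the $\varepsilon=0$ solution of \eqref{FBSDE:SDE}.

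Next I would invoke \autoref{SDE'shedongguji} with the particular choice $t'=t$, $x'=x$, $\varepsilon'=0$. Its estimate then collapses to
\begin{align*}
\mathbb{E}\Bigl[\sup_{t\les s\les T}\bigl|X^{\varepsilon,t,x}_s-\varphi^{t,x}_s\bigr|^2\Bigr]
= \mathbb{E}\Bigl[\sup_{t\les s\les T}\bigl|X^{\varepsilon,t,x}_s-X^{0,t,x}_s\bigr|^2\Bigr]
\les C\bigl(0+\varepsilon^2+0\bigr)=C\varepsilon^2,
\end{align*}
with the constant $C$ inherited directly from \autoref{SDE'shedongguji} and therefore independent of $t$, $x$ and $\varepsilon$.

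Since the whole argument is a specialization of an already-proved result, there is no real obstacle; the only point needing care is the identification $X^{0,t,x}\equiv\varphi^{t,x}$, but this follows immediately from strong uniqueness applied to \eqref{FBSDE:SDE} at $\varepsilon=0$. Should one prefer a self-contained derivation, it would suffice to rerun the Gronwall argument used in the proof of \autoref{SDE'shedongguji}: bound $\bigl|\varepsilon\int_t^s \sigma(r)\,dW_r\bigr|^2$ via the Burkholder--Davis--Gundy inequality together with the boundedness of $\sigma$ to produce the $\varepsilon^2$ term, control the drift difference by the Lipschitz property of $b$, and close by Gronwall's lemma.
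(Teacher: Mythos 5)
Your proposal is correct and coincides with the paper's own argument: the paper likewise notes that $X^{0,t,x}\equiv\varphi^{t,x}$ and obtains the estimate by specializing \autoref{SDE'shedongguji} to $t'=t$, $x'=x$, $\varepsilon'=0$ (which is permitted since that proposition allows $\varepsilon'\in[0,1]$). Nothing further is needed.
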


\subsection{Large Deviation Principle for BSDEs}

\ms

In this subsection we give the large deviation principle of the law of $\uly^{\varepsilon,t,x}$ in the space $C\left([t,T];\IR\right)$,
namely the solution $(\uly^{\varepsilon,t,x}, \ulz^{\varepsilon,t,x})$ for BSDE (\ref{FBSDE:BSDE}) converges to, as $\varepsilon$ goes to $0$, the solution $(\psi^{t,x},0)$ of the following backward ODE:
\begin{equation}
\label{BODE}
  \dot{\psi}^{t,x}_s = - f\left( t, \psi^{t,x}_s ,0 \right), \q~ 0 \les t \les s \les T, \ \ and  \  \  \psi_T = g(\varphi^{t,x}_T).
\end{equation}

\ms

Now we consider the following two equations:
\begin{align}
\label{FBSDE:BSDEjieduan}
Y^{n,\varepsilon ,t,x}_s & =  g( X^{\varepsilon,t,x}_T ) + \int_s^T f_n ( r,
Y^{n,\varepsilon ,t,x}_r,Z^{n,\varepsilon ,t,x}_r ) dr - \int_s^T Z^{n,\varepsilon ,t,x}_r dW_r ;   \\
\label{BODEjieduan}
\dot{\psi}^{n,t,x}_s  & =  - f_n ( t,   \psi^{n,t,x}_s ,0 )  \  \ and  \ \ \psi^{n,t,x}_T = g(\varphi^{t,x}_T), \q~ 0 \les t \les s \les T,
\end{align}
where the function $f_n(t,y,z)$ is defined by (\ref{f-n}).

\ms

Obviously, for any $n \ges 2L$, by \autoref{BSDEfbijin} and the standard ODEs theory, there exists a unique continuous solution $\psi^{n,t,x}$ to the backward ODE \eqref{BODEjieduan}. And we define
 \begin{equation}
\label{u_n0}
u^{n,0} (t,x)  \deq \psi^{n,t,x}_t.
\end{equation}

\ms

Based on  \autoref{BSDEgai} and \autoref{PDEnian}, we can get the following result.
\begin{corollary}
Under Assumptions (A1)-(A3), take $(t,x)\in[0,T)\times \IR^m$ and any $\varepsilon \in(0,1]$, then for any sufficiently large $n \in  \dbN$, the function $u^{n,\varepsilon}$ defined as below:
\begin{equation}
\label{u_n}
u^{n,\varepsilon} (t,x) \deq Y^{n,\varepsilon,t,x}_t,
\end{equation}
 where $Y^{n,\varepsilon,t,x}_s$ is the first component of the solution of BSDE \eqref{FBSDE:BSDEjieduan}, is a viscosity solution of the following PDE:
\begin{equation*}\left\{\begin{aligned}
\ds &\frac{\partial u^{n,\varepsilon}}{\partial t} \left( t,x\right) + \mathcal{L}^{\varepsilon}_{t,x} u^{n,\varepsilon} \left(t,x \right)+
  f_n \left(t,u^{n,\varepsilon} \left(t,x \right),\left(\varepsilon \sigma^T \nabla u^{n,\varepsilon} \right) \left(t,x \right)\right)=0, \\
\ns\ds &u^{n,\varepsilon} \left(T,x \right)= g(x),
\end{aligned}\right.
\end{equation*}
with the second order differential operator $\mathcal{L}^{\varepsilon}_{t,x}$ being the infinitesimal semi-group generator of the Markov process
$(X_s^{\varepsilon,t,x})_{s\in[t,T]}$, the solution of SDE \eqref{FBSDE:SDE}, given by
$$
{\cal L}^{\varepsilon}_{t,x}\deq\frac{\varepsilon^2}{2} \sum_{i,j=1}^m (\sigma \sigma^T)_{i,j}\left( t\right) \frac{\partial
^2}{\partial x_i\partial x_j}+\sum_{i=1}^m b_i\left( t,x\right) \frac
\partial {\partial x_i},
$$
and $f_n(t,y,z)$ is defined by (\ref{f-n}).
\end{corollary}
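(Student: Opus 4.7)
The plan is to observe that this corollary is essentially a direct consequence of \autoref{PDEnian} applied to the $\varepsilon$-perturbed decoupled FBSDE system \eqref{FBSDE:SDE}--\eqref{FBSDE:BSDEjieduan}, so the work reduces to checking that the hypotheses of \autoref{PDEnian} still hold with the perturbed coefficients and to keeping careful track of the $\varepsilon$-dependence in the resulting PDE.

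First I would view SDE \eqref{FBSDE:SDE} as a forward SDE with drift $b(r,x)$ and diffusion coefficient $\sigma^{\varepsilon}(r) \deq \varepsilon\sigma(r)$. Since $\varepsilon\in(0,1]$ and the original $b,\sigma$ satisfy (A1) with bound $L$, the pair $(b,\sigma^{\varepsilon})$ still satisfies (A1) (with the same constant $L$). The associated infinitesimal semi-group generator is then precisely $\mathcal{L}^{\varepsilon}_{t,x}$, because $\sigma^{\varepsilon}(\sigma^{\varepsilon})^{T}=\varepsilon^{2}\sigma\sigma^{T}$ produces the factor $\varepsilon^{2}/2$ in front of the second-order term, while the drift part is unchanged. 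Next, by \autoref{BSDEfbijin}, for sufficiently large $n$ the generator $f_{n}$ inherits continuity, linear growth in $y$, quadratic growth in $z$, Lipschitz continuity in $y$, and local Lipschitz continuity in $z$, so $(f_{n},g)$ satisfies (A2)--(A4); hence BSDE \eqref{FBSDE:BSDEjieduan} has a unique adapted solution $(Y^{n,\varepsilon,t,x},Z^{n,\varepsilon,t,x})\in S_{\dbF}^{\infty}\times L_{\dbF}^{\infty}$ by \autoref{BSDEbounded} and \autoref{BSDEbijin}.

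With these ingredients in place I would invoke \autoref{PDEnian} for the decoupled FBSDE with forward coefficients $(b,\sigma^{\varepsilon})$ and backward coefficients $(f_{n},g)$. That corollary tells us $u^{n,\varepsilon}(t,x)\deq Y^{n,\varepsilon,t,x}_{t}$ is a viscosity solution of
\begin{equation*}
\frac{\partial u^{n,\varepsilon}}{\partial t}(t,x)+\mathcal{L}^{\varepsilon}_{t,x}u^{n,\varepsilon}(t,x)+f_{n}\!\left(t,u^{n,\varepsilon}(t,x),\bigl((\sigma^{\varepsilon})^{T}\nabla u^{n,\varepsilon}\bigr)(t,x)\right)=0,
\end{equation*}
subject to $u^{n,\varepsilon}(T,x)=g(x)$. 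Substituting $\sigma^{\varepsilon}=\varepsilon\sigma$ into the last argument of $f_{n}$ yields exactly the PDE stated in the corollary. The fact that the $Z$-variable is naturally identified with $(\sigma^{\varepsilon})^{T}\nabla u^{n,\varepsilon}=\varepsilon\sigma^{T}\nabla u^{n,\varepsilon}$ is the standard Feynman--Kac/Malliavin link already used in the proofs of \autoref{BSDEgai} and \autoref{PDEnian}, so no new argument is required here.

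There is no substantive obstacle: the only thing to verify carefully is the bookkeeping of $\varepsilon$, namely that it appears as $\varepsilon^{2}/2$ in $\mathcal{L}^{\varepsilon}_{t,x}$ (from the quadratic variation of the diffusion) but only as the first power $\varepsilon$ inside the nonlinearity $f_{n}(t,\cdot,\varepsilon\sigma^{T}\nabla u^{n,\varepsilon})$ (from the identification of $Z$ with the gradient). All other steps -- existence and uniqueness of the FBSDE solution, boundedness of $(Y^{n,\varepsilon,t,x},Z^{n,\varepsilon,t,x})$, and the viscosity-solution characterization -- are immediate consequences of the results already established in Section 3 applied to the perturbed system.
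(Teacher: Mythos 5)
Your proposal is correct and follows essentially the same route as the paper: the authors state this corollary as an immediate consequence of \autoref{BSDEgai} and \autoref{PDEnian} applied to the perturbed system, which is exactly your argument of replacing $\sigma$ by $\varepsilon\sigma$, verifying that (A1) and the hypotheses on $f_n$ from \autoref{BSDEfbijin} persist, and reading off the generator $\mathcal{L}^{\varepsilon}_{t,x}$ and the $\varepsilon\sigma^{T}\nabla u^{n,\varepsilon}$ argument of $f_n$. Your explicit bookkeeping of the $\varepsilon^{2}$ versus $\varepsilon$ factors is a helpful elaboration of what the paper leaves implicit.
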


\ms

In order to do establish some estimates on $Y^{n,\varepsilon,t,x}$, for any sufficiently large $n \in  \dbN$, the function $f_n(t,x,y,z)$ is defined by (\ref{f-n}), we consider the following BSDEs:
\begin{equation}
\label{FBSDE:BSDEjieduan'}
Y^{n,\varepsilon' ,t',x'}_s   =  g( X^{\varepsilon',t',x'}_T ) + \int_s^T f_n ( r,
Y^{n,\varepsilon',t',x'}_r,Z^{n,\varepsilon' ,t',x'}_r ) dr - \int_s^T Z^{n,\varepsilon' ,t',x'}_r dW_r.
\end{equation}

\begin{proposition}\label{yn-yn}
Under Assumptions (A1)-(A3) and (A5), take $(t,x)$ and $(t',x') \in[0,T) \times \IR^m$, for any $\varepsilon$, $\varepsilon' \in[0,1]$ and for any sufficiently large $n \in  \dbN$, there exists the unique solutions $(Y^{n,\varepsilon ,t,x}, Z^{n,\varepsilon ,t,x})$ and $(Y^{n,\varepsilon' ,t',x'}, Z^{n,\varepsilon' ,t',x'}) \in S_\dbF^\i(0,T;\dbR) \times L_\dbF^\infty(0,T;\dbR^d)$ for the BSDEs \eqref{FBSDE:BSDEjieduan} and \eqref{FBSDE:BSDEjieduan'}, respectively. Moreover, there exists a constant $C>0$, independent of $t, t', x, x', \varepsilon, \varepsilon'$ and $n $, such that
\begin{equation*}
\|Y_s^{n,\varepsilon ,t,x}-Y_s^{n,\varepsilon' ,t',x'}\|_{S_\dbF^2(0,T;\dbR)}^2
\les  C (|t-t'| + |\varepsilon - \varepsilon'|^2 +|x-x'|^2).
\end{equation*}
\end{proposition}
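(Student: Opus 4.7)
The plan is to mirror the proof of \autoref{BSDEguji} almost verbatim, with \autoref{SDEguji} replaced by (a higher-moment version of) \autoref{SDE'shedongguji} to absorb the additional dependence on the perturbation parameter $\varepsilon$. The existence and uniqueness part comes directly from \autoref{BSDEbijin}: for each fixed $\varepsilon \in [0,1]$, the drift $b$ and diffusion $\varepsilon\sigma$ still satisfy Assumption (A1), so \eqref{FBSDE:BSDEjieduan} admits a unique solution in $S_\dbF^\infty \times L_\dbF^\infty$. The crucial observation is that the $L^\infty$-bound $M$ on $(Y^{n,\varepsilon,t,x},Z^{n,\varepsilon,t,x})$ can be chosen independently of $(n,t,x,\varepsilon)$: the bound on $Y$ follows from the uniform growth of $f_n$ provided by \autoref{BSDEfbijin}(1) together with boundedness of $g$, while the bound on $Z$ comes via Proposition 4.3 of Cheridito and Nam from the Malliavin derivative bound on $X^{\varepsilon,t,x}$, which \autoref{SDE'shedongguji} guarantees is uniform in $\varepsilon \in [0,1]$.

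With this uniform bound in hand, I set $\Delta Y^n \deq Y^{n,\varepsilon,t,x}-Y^{n,\varepsilon',t',x'}$, $\Delta Z^n \deq Z^{n,\varepsilon,t,x}-Z^{n,\varepsilon',t',x'}$, $\Delta\xi_T \deq g(X^{\varepsilon,t,x}_T)-g(X^{\varepsilon',t',x'}_T)$, and linearize to obtain exactly the same linear BSDE \eqref{LBSDE}. The coefficient $a^n$ is bounded by $L$; the BMO control on $b^n$ is then produced in two regimes exactly as in \autoref{BSDEguji}: for small $n$, the uniform bound $M$ on the $Z$-processes gives a deterministic bound on $b^n$, and for $n \ges N_0$, inequality \eqref{ineq} combined with Assumption (A5) bounds $b^n$ by the BMO process $L_\cd$. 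Hence $\sup_n\|b^n\|_{\cH^2_{BMO}[0,T]}\les N^*$ with $N^*$ independent of all perturbation parameters.

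The Girsanov change of measure and reverse H\"older argument from \autoref{BSDEguji} then apply unchanged, with constants $q^*$ and $C^*_q$ depending only on $N^*$. This yields, for every $p>p_0$,
\begin{equation*}
\|\Delta Y^n\|_{S_\dbF^p(0,T;\dbR)}^p \les C_p\,\mathbb{E}\bigl[|\Delta\xi_T|^p\bigr],
\end{equation*}
with $C_p$ independent of $(n,\varepsilon,\varepsilon',t,t',x,x')$. I then combine the Lipschitz property of $g$ from Assumption (A3) with a standard Burkholder--Davis--Gundy-type extension of \autoref{SDE'shedongguji} to higher moments, obtaining
\begin{equation*}
\mathbb{E}\bigl[|\Delta\xi_T|^p\bigr]\les C_p\bigl(|t-t'|^{p/2}+|\varepsilon-\varepsilon'|^p+|x-x'|^p\bigr).
\end{equation*}
Choosing $p\ges\max\{2,p_0\}$, invoking the embedding $\|\cdot\|_{S^2}\les\|\cdot\|_{S^p}$, and using $(a+b+c)^2\les 3(a^2+b^2+c^2)$ yields the stated bound in terms of $|t-t'|+|\varepsilon-\varepsilon'|^2+|x-x'|^2$.

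The hard part is really just bookkeeping: verifying that the reverse-H\"older constants $q^*, C^*_q$, and hence the final constant $C$, can be pinned down independently of the five perturbation parameters $(n,\varepsilon,\varepsilon',t,x)$ versus $(n,\varepsilon',t',x')$. This hinges on the uniformity of the $L^\infty$-bound $M$ on the $Z$-components, which itself uses the $\varepsilon$-uniform Malliavin estimate of \autoref{SDE'shedongguji}. Once that uniformity is established, the remainder of the proof is a direct transcription of the proof of \autoref{BSDEguji} with the forward estimate replaced by \autoref{SDE'shedongguji}.
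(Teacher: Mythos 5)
Your proposal is correct and follows essentially the same route as the paper: the paper likewise obtains existence by the arguments of \autoref{BSDEbijin} and then derives the estimate by invoking \autoref{BSDEfbijin}, the linearization/Girsanov/reverse-H\"older machinery of \autoref{BSDEguji}, and the forward estimate of \autoref{SDE'shedongguji} applied to $\Delta\xi_T=g(X^{\varepsilon,t,x}_T)-g(X^{\varepsilon',t',x'}_T)$. If anything, you are more careful than the paper in two spots it glosses over --- passing from the $S^p$ bound ($p>p_0$) down to the claimed $S^2$ bound via the embedding, and noting that a $p$-th moment version of \autoref{SDE'shedongguji} is needed --- both of which are routine.
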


\begin{proof}
Applying the similar arguments in  \autoref{BSDEbijin}, we can easily get that there exists a unique bounded solution for the BSDEs
\eqref{FBSDE:BSDEjieduan} and \eqref{FBSDE:BSDEjieduan'}, respectively.

\ms

For any sufficiently large $n \in  \dbN$, the function $f_n(t,x,y,z)$ is defined by (\ref{f-n}), by  \autoref{BSDEfbijin},  \autoref{BSDEguji} and \autoref{SDE'shedongguji}, we have
\begin{align*}
  \|Y_s^{\varepsilon ,n,t,x}-Y_s^{n,\varepsilon' ,t',x'}\|_{S_\dbF^2(0,T;\dbR)}^2 & \les  C  \|g( X^{\varepsilon,t,x}_T ) - g( X^{\varepsilon',t',x'}_T ) \|_{L^2}^2    \\
& \les  C (|t-t'| + |\varepsilon - \varepsilon'|^2 +|x-x'|^2),
\end{align*}
where $C$ is a universal constant which will change from line to line and does not depend on $t, t', x, x', \varepsilon, \varepsilon'$ and $n $.
\end{proof}

\ms

By using the similar arguments in \autoref{BSDEbijin}, \autoref{BSDEguji} and \autoref{yn-yn}, we can get the following properties.

\begin{proposition}\label{BSDEshedongguji}
Under Assumptions (A1)-(A3) and (A5), for any $\varepsilon\in(0,1]$ and take $(t,x)\in[0,T)\times \IR^m$, for any sufficiently large $n \in  \dbN$, the equations \eqref{FBSDE:BSDEjieduan} and \eqref{BODEjieduan} have unique solutions $(Y^{n,\varepsilon,t,x},Z^{n,\varepsilon,t,x}) \in S_\dbF^\i(0,T;\dbR) \times L_\dbF^\infty(0,T;\dbR^d)$ and $\psi^{n,t,x}\in C([t,T],\IR)$, respectively, and there exists a constant $C>0$, independent of $t,$ $x,$ $\varepsilon$ and $n $, such that
\begin{equation*}
{\Bbb E}\left[ \sup\limits_{t\les s\les T}\left| Y^{n,\varepsilon,t,x}_s - \psi^{n,t,x}_s \right|^2 \right]
\les C \varepsilon^2.
\end{equation*}
Moreover, $Y^{n,\varepsilon,t,x}$ increasingly converges to $\uly^{\varepsilon,t,x}$, where the solution $(\uly^{\varepsilon,t,x}, \ulz^{\varepsilon,t,x})$ for BSDE \eqref{FBSDE:BSDE}, as $n\rightarrow \infty$.
\end{proposition}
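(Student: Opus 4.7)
The plan is to address the three claims in sequence: existence and uniqueness of the perturbed BSDE \eqref{FBSDE:BSDEjieduan} and the backward ODE \eqref{BODEjieduan}; the $\varepsilon^{2}$-rate estimate; and the monotone convergence $Y^{n,\varepsilon,t,x}\uparrow\uly^{\varepsilon,t,x}$. The first claim is essentially free: the forward SDE \eqref{FBSDE:SDE} differs from \eqref{SDE} only by a rescaling of $\sigma$ by $\varepsilon\in(0,1]$, so it still satisfies (A1) uniformly; Proposition \ref{BSDEbijin} applied in the perturbed setting produces a unique bounded solution $(Y^{n,\varepsilon,t,x},Z^{n,\varepsilon,t,x})$ whose uniform $L^{\infty}$-bound $M$ does not depend on $n,t,x,\varepsilon$. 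For the ODE, parts (1) and (4) of Lemma \ref{BSDEfbijin} show that $y\mapsto f_{n}(r,y,0)$ is Lipschitz in $y$ with constant $L$ independent of $n$, so classical ODE theory gives a unique continuous $\psi^{n,t,x}\in C([t,T];\dbR)$, again bounded independently of $n$. The monotone convergence also reduces to Proposition \ref{BSDEbijin} transposed to the perturbed setting: since $n\mapsto f_{n}$ is increasing, the comparison theorem forces $Y^{n,\varepsilon,t,x}$ to be increasing in $n$, and the monotone stability proposition identifies its limit as $\uly^{\varepsilon,t,x}$, the minimal bounded solution of BSDE \eqref{FBSDE:BSDE}.

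The heart of the proof is the rate estimate. The key idea is to view $\psi^{n,t,x}$ as a degenerate BSDE with vanishing $Z$-component,
\[
\psi^{n,t,x}_{s}=g(\varphi^{t,x}_{T})+\int_{s}^{T}f_{n}(r,\psi^{n,t,x}_{r},0)\,dr-\int_{s}^{T}0\cdot dW_{r},
\]
and to compare it with \eqref{FBSDE:BSDEjieduan}. Setting $\Delta Y_{s}=Y^{n,\varepsilon,t,x}_{s}-\psi^{n,t,x}_{s}$ and $\Delta Z_{s}=Z^{n,\varepsilon,t,x}_{s}$, a standard linearization of $f_{n}$ separately in $y$ and $z$ yields
\[
\Delta Y_{s}=\eta+\int_{s}^{T}\big(\alpha^{n}_{r}\Delta Y_{r}+\beta^{n}_{r}\Delta Z_{r}\big)\,dr-\int_{s}^{T}\Delta Z_{r}\,dW_{r},
\]
with $\eta=g(X^{\varepsilon,t,x}_{T})-g(\varphi^{t,x}_{T})$, $|\alpha^{n}_{r}|\les L$ from (A3), and $\beta^{n}$ controlled by $\max\{N_{0}(1+M),L_{r}\}$ for sufficiently large $n$ through exactly the mechanism of the proof of Proposition \ref{BSDEguji}: assumption (A5) applies to the two pairs $(\psi^{n,t,x}_{r},Z^{n,\varepsilon,t,x}_{r})$ and $(\psi^{n,t,x}_{r},0)$, both of which lie in the fixed bounded set $\mathcal{C}_{0}$ once $n$ is large enough. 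Consequently $\|\beta^{n}\|_{\cH^{2}_{BMO}}$ is bounded uniformly in $n$ and $\varepsilon$. The Girsanov change of measure and reverse H\"{o}lder inequality argument of Proposition \ref{BSDEguji} then delivers
\[
\|\Delta Y\|^{p}_{S^{p}_{\dbF}(0,T;\dbR)}\les C_{p}\,\dbE\big[|\eta|^{p}\big]
\]
for some $p>p_{0}$ independent of $n$ and $\varepsilon$. Combining the Lipschitz property of $g$ with a standard higher-moment refinement of Proposition \ref{SDE'shedongguji} gives $\dbE[|X^{\varepsilon,t,x}_{T}-\varphi^{t,x}_{T}|^{p}]\les C\varepsilon^{p}$, hence $\dbE[|\eta|^{p}]\les C\varepsilon^{p}$; Jensen's inequality then produces the desired $S^{2}$-bound of order $\varepsilon^{2}$.

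The main obstacle is precisely the uniformity of the $\cH^{2}_{BMO}$-bound of $\beta^{n}$ in both $n$ and $\varepsilon$. This depends on two points that must be checked carefully: that the pairs arising in the linearization remain in the fixed bounded set $\mathcal{C}_{0}$ of (A5), which hinges on the fact (highlighted in the remark after Proposition \ref{BSDEbounded}) that the $L^{\infty}$-bound $M$ on $(Y^{n,\varepsilon,t,x},Z^{n,\varepsilon,t,x})$ is independent of both $n$ and the quadratic constant $L_{z}$ of (A4); and that the cutoff index $N_{0}$ beyond which the (A5)-inequality for $f_{n}$ takes effect can be chosen independently of $\varepsilon$. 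Once these uniformity points are in hand, the Girsanov argument transposes verbatim from the unperturbed setting treated in Proposition \ref{BSDEguji}.
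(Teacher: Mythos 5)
Your proof is correct and follows essentially the same route as the paper: the paper simply observes that $\psi^{n,t,x}=Y^{n,0,t,x}$ (the $\varepsilon=0$ instance of the perturbed BSDE, since $X^{0,t,x}\equiv\varphi^{t,x}$) and invokes \autoref{yn-yn} with $\varepsilon'=0$, $t'=t$, $x'=x$, whereas you unfold that proposition's linearization/Girsanov/reverse-H\"older argument in this special case. The existence, uniqueness and monotone-convergence parts are handled identically, and your explicit attention to the uniformity of the BMO bound in $n$ and $\varepsilon$ and to the higher-moment estimate on $X^{\varepsilon,t,x}_T-\varphi^{t,x}_T$ only makes explicit what the paper leaves implicit.
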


\begin{proof}
For any $\varepsilon\in(0,1]$ and take $(t,x)\in[0,T)\times \IR^m$, for any sufficiently large $n \in  \dbN$, by \autoref{BSDEfbijin} and applying the similar arguments as \autoref{BSDEbijin}, there exists a unique solution $(Y^{n,\varepsilon,t,x}, Z^{n,\varepsilon,t,x})$ to the equation \eqref{FBSDE:BSDEjieduan}. By the theory of ODEs, the existence and uniqueness solution of the equation \eqref{BODEjieduan} is also easily proved.

\ms

Noting that when $\varepsilon = 0$, $X^{0,t,x} \equiv \varphi ^{t,x}$ and $Y^{n,0,t,x} \equiv \psi^{n,t,x}$, then by Proposition \ref{yn-yn}, we have
\begin{align*}
  {\Bbb E}\left[ \sup\limits_{t\les s\les T}\left| Y^{n,\varepsilon,t,x}_s - \psi^{n,t,x}_s \right|^2 \right] \les C \varepsilon^2.
\end{align*}

\ms

Moreover, by \autoref{BSDEbijin}, we have $Y^{n,\varepsilon,t,x}$ is monotone increasing convergence to $\uly^{\varepsilon,t,x}$, as $n\rightarrow \infty$.
\end{proof}

\ms

Let's recall the link between FBSDEs and PDEs in the viscosity sense, namely in view of \autoref{21.10.5} by solving the small perturbation FBSDE \eqref{FBSDE:SDE}-\eqref{FBSDE:BSDE} one is able to solve the following PDEs, for $(t,x)\in[0,T] \times \IR^m ~\text{and} ~ \varepsilon >0$,
\begin{equation}\label{PDEshedong}\left\{\begin{aligned}
\ds &\frac{\partial u^\varepsilon}{\partial t} \left( t,x\right) + \mathcal{L}^{\varepsilon}_{t,x} u^\varepsilon \left(t,x \right)+
  f\left(t,u^\varepsilon \left(t,x \right),\left(\varepsilon \sigma^T \nabla u^\varepsilon \right) \left(t,x \right)\right)=0, \\
\ns\ds &u^\varepsilon \left(T,x \right) = g(x),
\end{aligned}\right.
\end{equation}

\ms

So the relation between the first component $\ul{Y}^{\varepsilon,t,x}_t$ of the solution to BSDE \eqref{FBSDE:BSDE} and the viscosity solution $\ul{u}^\varepsilon(t,x)$, is defined by
\begin{equation}\label{ue}
  \ul{u}^\varepsilon(t,x) \deq \uly^{\varepsilon,t,x}_t,
\end{equation}
%where $(\uly^{\varepsilon,t,x}, \ulz^{\varepsilon,t,x})$ is  the minimal solution of BSDE (\ref{FBSDE:BSDE}),
to the PDE \eqref{PDEshedong} is given by the identity:
$$\ul{Y}^{\varepsilon,t,x}_s = \ul{u}^\varepsilon(s,X^{\varepsilon,t,x}_s), \q~ s \in [t,T].$$
And when $\varepsilon =0$ one is lead to the first order PDEs for $(t,x)\in[0,T]\times\IR^m$
%\begin{align}\label{PDEÒ»œ×}
%\frac{\partial u^0}{\partial t} \left( t,x\right)+ (b^T \nabla u^0)(t,x)
% + f\big( t,u^0(t,x),0 \big)=0, \q~ u^0(T,x)=g(x).
%\end{align}
\begin{equation}\label{PDEyijie}\left\{\begin{aligned}
\ds &\frac{\partial u^0}{\partial t} \left( t,x\right)+ (b^T \nabla u^0)(t,x)
 + f\big( t,u^0(t,x),0 \big)=0, \\
\ns\ds &u^0(T,x)=g(x).
\end{aligned}\right.
\end{equation}
We can also get the relationship between the first component of the solution $(\psi^{t,x},0)$ of the backward ODE \eqref{BODE} and
the viscosity solution $\ul{u}^0(t,x)$, is defined by
\begin{equation}\label{u0}
  \ul{u}^0 (t,x) \deq \psi^{t,x}_t,
\end{equation}
%where $(\psi^{t,x}, 0)$ is the solution of backward ODE \eqref{BODE},
to the PDE \eqref{PDEyijie}, is given by the identity:
$$\psi^{t,x}_s = \ul{u}^0(s,\varphi^{t,x}_s), \q~ s \in [t,T].$$

\ms

Notice that we can always interpret the backward ODEs as the deterministic BSDEs. This interpretation allows us to use \autoref{yn-yn} and by similar arguments in \autoref{PDEzuixiaojielianxu} to obtain the function $u^{n,\varepsilon}(t,x)$ uniformly converges to $\ul{u}^\varepsilon(t,x)$, with respect to $\varepsilon, t$ and $x$, as $n \rightarrow \infty.$

\begin{proposition}\label{PDEshedongzuixiaojielianxu}
Under Assumptions (A1)-(A3) and (A5), then for any sufficiently large $n \in  \dbN$, the function $u^{n,\varepsilon}(t,x)$ defined in \eqref{u_n} and \eqref{u_n0} converges to $\ul{u}^\varepsilon(t,x)$ defined in \eqref{ue} and \eqref{u0}, uniformly in any compact subset of $[0,1]\times [0,T]\times \IR^m$, as $n \rightarrow \infty.$
\end{proposition}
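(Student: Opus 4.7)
The plan is to mirror the strategy of Proposition~\ref{PDEzuixiaojielianxu}, but now on the enlarged parameter domain $[0,1]\times[0,T]\times\IR^m$, with Proposition~\ref{yn-yn} taking over the role that Proposition~\ref{BSDEguji} played there. Two ingredients are required: a uniform bound on $\{u^{n,\varepsilon}\}$ over compacts, and a modulus of continuity in the triple $(\varepsilon,t,x)$ that is independent of $n$.

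Uniform boundedness is inherited from the construction: the argument of Proposition~\ref{BSDEbijin} applied to BSDE~\eqref{FBSDE:BSDEjieduan} yields a constant $M$, independent of $n,\varepsilon,t,x$, bounding $Y^{n,\varepsilon,t,x}$ and $Z^{n,\varepsilon,t,x}$ in $S_\dbF^\infty$ and $L_\dbF^\infty$, respectively, and the same bound applies to $\psi^{n,t,x}$ at $\varepsilon=0$ via the ODE~\eqref{BODEjieduan}. Hence $|u^{n,\varepsilon}(t,x)|\les M$ uniformly on $[0,1]\times[0,T]\times\IR^m$.

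For uniform equicontinuity on a compact $\cC\subseteq[0,1]\times[0,T]\times\IR^m$, I would decompose, for $(\varepsilon,t,x),(\varepsilon',t',x')\in\cC$ with $t\ges t'$,
$$\bigl|u^{n,\varepsilon}(t,x)-u^{n,\varepsilon'}(t',x')\bigr|\les\bigl|\IE[Y^{n,\varepsilon,t,x}_t-Y^{n,\varepsilon',t',x'}_t]\bigr|+\bigl|\IE[Y^{n,\varepsilon',t',x'}_t-Y^{n,\varepsilon',t',x'}_{t'}]\bigr|.$$
Proposition~\ref{yn-yn} controls the first summand by $C(|t-t'|^{1/2}+|\varepsilon-\varepsilon'|+|x-x'|)$, with $C$ independent of $n$. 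Rewriting the second summand via BSDE~\eqref{FBSDE:BSDEjieduan'}, applying the growth estimate $|f_n(r,y,z)|\les L(1+|y|+2|z|^2)$ from Lemma~\ref{BSDEfbijin}(1), and using the uniform bound $M$ yields an upper bound of order $|t-t'|$. Together these give a uniform H\"older-type modulus on $\cC$ that is independent of $n$.

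Arzel\`a-Ascoli then extracts a subsequence converging uniformly on $\cC$. By Proposition~\ref{BSDEshedongguji}, $Y^{n,\varepsilon,t,x}$ is monotone increasing in $n$ and converges pointwise to $\uly^{\varepsilon,t,x}$ (including at $\varepsilon=0$, where the equation reduces to the ODE~\eqref{BODE}), so $u^{n,\varepsilon}\to\ul{u}^{\varepsilon}$ pointwise on $[0,1]\times[0,T]\times\IR^m$. Identification of the pointwise limit, combined with monotone convergence, upgrades the subsequential uniform convergence to full-sequence uniform convergence on $\cC$. The main technical point I anticipate is bookkeeping: ensuring that every constant in the chain of estimates remains genuinely independent of $n$ and $\varepsilon$, including at the endpoint $\varepsilon=0$; this is already built into Propositions~\ref{BSDEshedongguji} and~\ref{yn-yn}, so no new ideas are needed.
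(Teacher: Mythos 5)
Your proposal is correct and follows essentially the same route as the paper's own proof: the same two-term decomposition of $|u^{n,\varepsilon}(t,x)-u^{n,\varepsilon'}(t',x')|$, with Proposition~\ref{yn-yn} controlling the first term and the growth bound on $f_n$ plus the uniform bound $M$ controlling the second, followed by Arzel\`a--Ascoli and the monotonicity of $u^{n,\varepsilon}$ in $n$ to upgrade subsequential to full uniform convergence. The only cosmetic differences are that the paper obtains uniform boundedness by comparison with the generators $F^1,F^2=\pm L(1+|y|+2|z|^2)$ rather than by citing the constant $M$ from Proposition~\ref{BSDEbijin}, and your square-root form of the modulus from Proposition~\ref{yn-yn} is the more careful reading of that estimate.
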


\begin{proof}
For any sufficiently large $n \in  \dbN$, we consider the perturbed FBSDE \eqref{FBSDE:SDE}-\eqref{FBSDE:BSDEjieduan}. Denote by $(Y^{F^1},0)$ and $(Y^{F^2},0)$ the solutions of two deterministic BSDEs with the generators
$F^1(r,y,z)= L(1+|y|+2|z|^2)$ and $F^2(r,y,z)= - L(1+|y|+2|z|^2)$, the terminal conditions $L$ and $-L$,
respectively. By \autoref{BSDEfbijin} and the comparison theorem (Theorem 7.3.1 of Zhang \cite{ref26}), we get
$Y^{F^2}_t \les Y^{n,\varepsilon ,t,x}_t \les Y^{F^1}_t$.
Since the solutions $Y^{F^1}_t$ and $Y^{F^1}_t$ are bounded on $[0,T]$, we have that for any compact set $\mathcal{C}$ of $[0,1]\times [0,T]\times \IR^m$, the sequence $\{ u^{n,\varepsilon}(t,x) \} _n$ defined in \eqref{u_n} and \eqref{u_n0} is uniformly bounded on $\mathcal{C}$.

\ms

Now we indicate that $\{ u^{n,\varepsilon}(t,x) \} _n$ is uniformly H\"{o}lder estimate on $\mathcal{C}$ too. Indeed, we have the following uniform estimate on $\mathcal{C}$: For $t \ges t'$,
\begin{align}
\label{u-u'e}
\ds  |u^{n,\varepsilon} (t,x) - u^{n,\varepsilon'} (t',x')| & =|Y_t^{n,\varepsilon,t,x}-Y_{t'}^{n,\varepsilon',t',x'}| = \Big| \mathbb{E} \[ Y_t^{n,\varepsilon,t,x}-Y_{t'}^{n,\varepsilon',t',x'} \] \Big|  \nonumber \\
\ns\ds     & \les \Big|\mathbb{E} \[ Y_t^{n,\varepsilon,t,x}-Y_{t}^{n,\varepsilon',t',x'} \] \Big| + \Big| \mathbb{E} \[ Y_t^{n,\varepsilon',t',x'}-Y_{t'}^{n,\varepsilon',t',x'} \] \Big|.
\end{align}
For the first term in the right-hand side  of (\ref{u-u'e}), from  \autoref{yn-yn}, we have
\begin{align}
\label{4inequality}
\Big|\mathbb{E} \[ Y_t^{n,\varepsilon,t,x}-Y_{t}^{n,\varepsilon',t',x'} \] \Big| & \les \mathbb{E} \[ |Y_t^{n,\varepsilon,t,x}-Y_{t}^{n,\varepsilon',t',x'}| \]
\les \|Y_t^{n,\varepsilon,t,x}-Y_{t}^{n,\varepsilon',t',x'}\|_{S_\dbF^2(0,T;\dbR)}  \nonumber \\
            & \les C_2  \left(|t-t'| + |\varepsilon - \varepsilon'|^2 +|x-x'|^2\right),
\end{align}
where the constant $C_2$ does not depend on $t, t', x, x', \varepsilon, \varepsilon'$ and $n $.
For the second term in the right hand side of (\ref{u-u'e}), we consider the following BSDE:
\begin{equation*}
Y_{t'}^{n,\varepsilon',t',x'} = Y_t^{n,\varepsilon',t',x'} + \int^t_{t'} f_n(r,Y_r^{n,\varepsilon',t',x'},Z_r^{n,\varepsilon',t',x'})dr - \int^t_{t'} Z_r^{n,\varepsilon',t',x'} dW_r.
\end{equation*}
By the growth condition of $f_n$ as in \autoref{BSDEfbijin}, we deduce that
\begin{align*}
\ds \Big| \mathbb{E} \[ Y_t^{n,\varepsilon',t',x'}-Y_{t'}^{n,\varepsilon',t',x'} \]\Big| & = \Big| \mathbb{E} \[ \int^t_{t'} f_n(r,Y_r^{n,\varepsilon',t',x'},Z_r^{n,\varepsilon',t',x'})dr \] \Big|  \\
\ns\ds            & \les \mathbb{E} \[ \int^t_{t'}  |f_n(r,Y_r^{n,\varepsilon',t',x'},Z_r^{n,\varepsilon',t',x'})|dr \] \\
\ns\ds            & \les \mathbb{E} \[ \int^t_{t'}  L(1 + |Y_r^{n,\varepsilon',t',x'}| + 2|Z_r^{n,\varepsilon',t',x'}|^2) dr \].
\end{align*}
Besides, \autoref{yn-yn} implies that $Y^{n,\varepsilon',t',x'}$ and $Z^{n,\varepsilon',t',x'}$  are bounded by some constant $M$, so that we have
\begin{align*}
\ds \Big| \mathbb{E} \[ Y_t^{n,\varepsilon',t',x'}-Y_{t'}^{n,\varepsilon',t',x'} \] \Big|
& \les \mathbb{E} \[ \int^t_{t'}  L(1 + |Y_r^{n,\varepsilon',t',x'}| + 2|Z_r^{n,\varepsilon',t',x'}|^2) dr \]  \\
\ns\ds & \les L(1+M+2M^2)|t-t'|.
\end{align*}
Combining the inequality (\ref{4inequality}) with the previous one, we easily derive that the sequence $\{ u^{n,\varepsilon}(t,x) \} _n$ admits the following uniform H\"{o}lder estimate on the compact set $\mathcal{C} \subseteq [0,1] \times [0,T]\times \IR^m$,
\begin{align*}
\ds  |u^{n,\varepsilon} (t,x) - u^{n,\varepsilon'} (t',x')|  \les C  \left(|t-t'| + |\varepsilon - \varepsilon'|^2 +|x-x'|^2\right),
\end{align*}
where the constant $C$ is independent of the parameters $n$, $t$, $t'$, $\varepsilon$, $\varepsilon'$, $x$ and $x'$.

\ms

Finally, applying the Arzel\'{a}-Ascoli theorem on the compact set $\mathcal{C}$ to extract a subsequence of $\{ u^{n,\varepsilon}(t,x) \} _n$, which is converging uniformly. Besides, note that $u^{n,\varepsilon}(t,x)$ is increasingly converges to $\ul{u}^\varepsilon(t,x)$, which completes the proof.
\end{proof}

\ms

To establish the large deviation principle for the solution of the BSDE \eqref{FBSDE:BSDE}, we need the following well-known contraction principle from Varadhan \cite{ref21}.

\begin{lemma}[Contraction principle]
\label{Contraction principle}
Let $\mathcal{X}$ and $\mathcal{Y}$ be two Polish spaces, and $\IP^{\varepsilon}$ a family of probability measures on the Borel subsets of $\mathcal{X}$. Let $\IP^{\varepsilon}$ satisfies the large deviation principle on $\mathcal{X}$ with a vrate function $I$. Let $F^\varepsilon$ be continuous maps from $\mathcal{X}$ to $\mathcal{Y}$ and assume that $\lim_{\varepsilon\to0} F^\varepsilon = F$ exists uniformly over compact subsets of $\mathcal{X}$. If we define the $\IQ^{\varepsilon}$ on $\mathcal{Y}$ by $\IQ^{\varepsilon} \deq \IP^{\varepsilon}(F^\varepsilon)^{-1}$, then $\IQ^{\varepsilon}$ satisfies the large deviation principle on $\mathcal{Y}$ with a rate function $J$ defined by
$$J(y) \deq \inf\Big\{I(x): x \in \mathcal{X} \text{ such that } y=F(x) \Big\}.$$
\end{lemma}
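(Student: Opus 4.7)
The plan is to verify the two half-LDPs for $\IQ^\varepsilon$ separately, using the hypothesized LDP for $\IP^\varepsilon$ combined with the uniform approximation $F^\varepsilon \to F$ on compact subsets of $\mathcal{X}$ to transfer information from the $F^\varepsilon$-preimages of sets in $\mathcal{Y}$ to the $F$-preimages. As a preliminary, I would check that $J$ is a (good) rate function: since each compact sublevel set $\{I\les a\}$ carries the uniform convergence $F^\varepsilon\to F$, the limit $F$ is continuous on it, and $\{J\les a\}=F(\{I\les a\})$ is the continuous image of a compact set, hence compact. Lower semicontinuity of $J$ then follows by a standard diagonal argument on approximating sequences $y_n\to y$ with $J(y_n)\to\liminf J(y_n)$.

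For the upper bound, I would fix a closed $C\subseteq\mathcal{Y}$ and $\alpha<\inf_{y\in C}J(y)$, and work on the compact set $K_\alpha\deq\{x\in\mathcal{X}:I(x)\les\alpha\}$. By the choice of $\alpha$, $F(K_\alpha)\cap C=\varnothing$; since $F(K_\alpha)$ is compact and $C$ is closed in the Polish space $\mathcal{Y}$, they are separated by a positive distance. Uniform convergence of $F^\varepsilon$ to $F$ on $K_\alpha$ then forces $(F^\varepsilon)^{-1}(C)\cap K_\alpha=\varnothing$ for all $\varepsilon$ small enough, so that $(F^\varepsilon)^{-1}(C)\subseteq\mathcal{X}\setminus K_\alpha$. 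The LDP upper bound for $\IP^\varepsilon$ applied to the closed set $\mathcal{X}\setminus K_\alpha$ (whose $I$-infimum is at least $\alpha$) yields $\limsup_{\varepsilon\to0}\varepsilon^2\log\IQ^\varepsilon(C)\les-\alpha$, and letting $\alpha\uparrow\inf_CJ$ closes this half.

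For the lower bound, fix an open set $O\subseteq\mathcal{Y}$, pick $y\in O$ with $J(y)<\infty$, and choose $x\in F^{-1}(\{y\})$ with $I(x)\les J(y)+\delta$. Continuity of $F$ at $x$ furnishes an open neighborhood $U$ of $x$ with $F(\overline{U})\subseteq O$; after shrinking, we can arrange $\overline{U}$ to lie in a compact sublevel set of $I$, and uniform convergence of $F^\varepsilon\to F$ on that compact set gives $F^\varepsilon(U)\subseteq O$ for all sufficiently small $\varepsilon$, i.e.\ $U\subseteq(F^\varepsilon)^{-1}(O)$. The LDP lower bound for $\IP^\varepsilon$ on $U$ then yields $\liminf_{\varepsilon\to0}\varepsilon^2\log\IQ^\varepsilon(O)\ges-I(x)\ges-J(y)-\delta$, and optimizing over $y\in O$ and $\delta\downarrow 0$ finishes the proof.

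The delicate step is the upper bound: one has to localize the large-deviation mass of $\IP^\varepsilon$ to a compact sublevel set of $I$ before the uniform convergence of $F^\varepsilon$ on compacts can be invoked, and this is precisely where the goodness of the rate function $I$ (the standard hypothesis of the contraction principle) is indispensable. Without goodness, the complement $\mathcal{X}\setminus K_\alpha$ could carry non-negligible $\IP^\varepsilon$-mass that cannot be separated from $C$ by the compact-set convergence $F^\varepsilon\to F$, and the argument would break down; so the main technical care is making sure the separation distance between $F(K_\alpha)$ and $C$ is exploited uniformly in $\varepsilon$ at the right scale relative to the rate $e^{-\alpha/\varepsilon^2}$.
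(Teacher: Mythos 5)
The paper offers no proof of this lemma---it is quoted directly from Varadhan \cite{ref21}---so your attempt must be judged on its own terms, and both halves contain a genuine gap whose common cure is the one ingredient you never invoke: exponential tightness of $\{\IP^{\varepsilon}\}$ (which does follow from an LDP with a \emph{good} rate function on a Polish space, but is a theorem, not a triviality). In the upper bound, $\mathcal{X}\setminus K_\alpha$ is \emph{open}, not closed ($K_\alpha$ is a closed sublevel set of the lower semicontinuous function $I$), so the LDP upper bound does not apply to it, and applying it to the closure does not help because the closure may meet $K_\alpha$ at points where $I<\alpha$. Worse, the estimate you want is simply false in general: on $\mathcal{X}=[0,1]$ take $\IP^{\varepsilon}=\frac12\delta_0+\frac12\delta_{\varepsilon}$ (Dirac masses), which satisfies the LDP with the good rate function $I(0)=0$, $I(x)=+\infty$ for $x>0$; then $K_\alpha=\{0\}$ and $\IP^{\varepsilon}(\mathcal{X}\setminus K_\alpha)=\frac12$ for every $\varepsilon$. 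The correct route replaces $K_\alpha$ by the compact sets furnished by exponential tightness and compares $\{x\in K:F^\varepsilon(x)\in C\}$ with the closed set $\{x\in K:F(x)\in C^\delta\}$, where $C^\delta$ is a closed $\delta$-fattening of $C$, letting $\delta\downarrow0$ at the end via the goodness of $J$.

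In the lower bound the fatal step is ``after shrinking, we can arrange $\overline{U}$ to lie in a compact sublevel set of $I$'': in the setting where the lemma is actually applied, $\mathcal{X}=C([t,T];\IR^m)$ is infinite-dimensional, so no nonempty open set has compact closure, and there is no compact set on which the uniform convergence $F^\varepsilon\to F$ could force $F^\varepsilon(U)\subseteq O$. The standard repair is again exponential tightness: it yields $\limsup_{\varepsilon\to0}\varepsilon^2\log\IP^{\varepsilon}\{x:d_{\mathcal{Y}}(F^\varepsilon(x),F(x))>\delta\}=-\infty$ for every $\delta>0$, i.e.\ $\IP^{\varepsilon}\circ(F^\varepsilon)^{-1}$ and $\IP^{\varepsilon}\circ F^{-1}$ are exponentially equivalent, after which the ordinary contraction principle for the single continuous map $F$ settles both bounds at once (continuity of $F$ on all of $\mathcal{X}$ does follow from your compact-set observation via sequential continuity). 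Two things in your write-up are worth keeping: the remark that goodness of $I$ must be added to the hypotheses as stated, and the identification $\{J\les a\}=F(\{I\les a\})$ used to show $J$ is a good rate function.
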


\begin{definition}
For any $t \in [0,T)$, the operator $F^\varepsilon$ is defined as
\begin{align*}
F^\varepsilon: C([t,T], \IR^m) & \to C([t,T],\IR)
\\
\psi & \mapsto F^\varepsilon(\psi) \deq \ul{u}^{\varepsilon}(\cdot,\psi_\cdot),
\end{align*}
where for any $\varepsilon > 0$, the function $\ul{u}^{\varepsilon}$ is defined in \eqref{ue}, and for $\varepsilon=0$, $\ul{u}^0$ is defined in \eqref{u0}.
\end{definition}

\ms

We observe that for all $0 \les t \les s \les T$, $x\in \IR^m$ and  $\varepsilon \ges 0$ we have $\ul{Y}^{\varepsilon,t,x}_s = F^\varepsilon( X^{\varepsilon,t,x})(s) = \ul{u}^\varepsilon(s, X^{\varepsilon,t,x}_s )$ . We now state and prove the main result concerning the LDP satisfied by the law induced by $\ul{Y}^{\varepsilon,t,x}$.

\begin{theorem}[Large Deviation Principle]
\label{BSDELDP}
 Let Assumptions (A1)-(A3) and (A5) hold, for any $(t,x)\in[0,T)\times \IR^m$, then the first component $\ul{Y}^{\varepsilon,t,x}$ of the solution of BSDE \eqref{FBSDE:BSDE} satisfies, as $\varepsilon$ goes to 0, a large deviation principle in the space $C\left([t,T];\IR\right)$, with the rate function $\widehat{I}_x$ defined for any function $\psi \in C\left([t,T];\IR\right)$ by
\begin{align*}
\widehat{I}_x(\psi)
& \deq
\inf\Big\{
I_x(\varphi): \varphi\in H([t,T],\IR) \text{ such that } \psi =F^0(\varphi)
\Big\},
\end{align*}
with the convention that $\inf \varnothing =+\infty$ and where $I_x(\cdot)$ is the rate function defined in \autoref{Freidlin-Wentzell}.

\end{theorem}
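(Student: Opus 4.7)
The plan is to invoke the Contraction principle (\autoref{Contraction principle}) with $\mathcal{X}=C([t,T],\IR^m)$, $\mathcal{Y}=C([t,T],\IR)$, and $\IP^{\varepsilon}$ the law of $X^{\varepsilon,t,x}$ on $\mathcal{X}$. By the Freidlin--Wentzell result (\autoref{Freidlin-Wentzell}), the family $\{\IP^{\varepsilon}\}$ already obeys an LDP with rate function $I_x$. Because of the identity $\ul{Y}^{\varepsilon,t,x}_s=\ul{u}^{\varepsilon}(s,X^{\varepsilon,t,x}_s)=F^{\varepsilon}(X^{\varepsilon,t,x})(s)$ recorded just before the theorem, the law of $\ul{Y}^{\varepsilon,t,x}$ coincides with $\IP^{\varepsilon}\circ(F^{\varepsilon})^{-1}$. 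Thus it suffices to check that (a) each $F^{\varepsilon}$ is continuous from $\mathcal{X}$ to $\mathcal{Y}$, and (b) $F^{\varepsilon}\to F^{0}$ uniformly on every compact subset of $\mathcal{X}$ as $\varepsilon\to0$.

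For (a), I would pass to the limit $n\to\infty$ in the uniform H\"older estimate $|u^{n,\varepsilon}(t,x)-u^{n,\varepsilon'}(t',x')|\les C(|t-t'|+|\varepsilon-\varepsilon'|^2+|x-x'|^2)$ obtained in the proof of \autoref{PDEshedongzuixiaojielianxu}; this shows that the limit $\ul{u}^{\varepsilon}$ is itself jointly continuous in $(s,x)$. If $\psi_k\to\psi$ uniformly in $\mathcal{X}$, then the graphs $\{(s,\psi_k(s))\}\cup\{(s,\psi(s))\}$ sit in a fixed compact subset of $[t,T]\times\IR^m$, so continuity of $\ul{u}^{\varepsilon}$ yields $F^{\varepsilon}(\psi_k)\to F^{\varepsilon}(\psi)$ in $\mathcal{Y}$.

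For (b), which I expect to be the principal obstacle, the approach is a three-term telescoping through the regularised functions $u^{n,\varepsilon}$. Fix a compact $K\subset\mathcal{X}$; then its graph $G_K=\{(s,\psi(s)):\psi\in K,\,s\in[t,T]\}$ is compact in $[t,T]\times\IR^m$. For $\psi\in K$ and $s\in[t,T]$, evaluated at the point $(s,\psi(s))$, I would write
$$|\ul{u}^{\varepsilon}-\ul{u}^{0}|\les |\ul{u}^{\varepsilon}-u^{n,\varepsilon}|+|u^{n,\varepsilon}-u^{n,0}|+|u^{n,0}-\ul{u}^{0}|.$$
Given $\eta>0$, \autoref{PDEshedongzuixiaojielianxu} applied on the compact $[0,1]\times G_K$ allows me to choose $n_0$ so that the first and third terms are each below $\eta/3$, uniformly in $\varepsilon\in[0,1]$ and in $(s,\psi(s))\in G_K$. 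With $n_0$ now frozen, the $n$-independent H\"older estimate recalled above, specialised to $t=t'$ and $x=x'$, gives $|u^{n_0,\varepsilon}(s,x)-u^{n_0,0}(s,x)|\les C\varepsilon^2$, so the middle term is also less than $\eta/3$ once $\varepsilon$ is small. This proves $F^{\varepsilon}\to F^{0}$ uniformly on $K$.

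Combining (a) and (b) with the LDP for $X^{\varepsilon,t,x}$, \autoref{Contraction principle} delivers the announced LDP for $\ul{Y}^{\varepsilon,t,x}$ with the rate function $\widehat{I}_x(\psi)=\inf\{I_x(\varphi):\psi=F^{0}(\varphi)\}$. The genuinely delicate point is step (b): since $f$ is only stochastically Lipschitz in $z$ under (A5), a direct Gronwall comparison of $\ul{u}^{\varepsilon}$ and $\ul{u}^{0}$ is unavailable, and the detour through the inf-convolution $u^{n,\varepsilon}$, together with the fact that its H\"older modulus in \autoref{PDEshedongzuixiaojielianxu} is \emph{independent of $n$}, is what makes the uniform-on-compacts convergence of $F^{\varepsilon}$ to $F^{0}$ tractable.
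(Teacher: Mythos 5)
Your proposal is correct and follows essentially the same route as the paper: the contraction principle is applied after checking continuity of $F^{\varepsilon}$ and uniform-on-compacts convergence $F^{\varepsilon}\to F^{0}$, the latter via exactly the same three-term telescoping through the inf-convolution approximations $u^{n,\varepsilon}$, using that the estimates of \autoref{PDEshedongzuixiaojielianxu} and the $\varepsilon$-modulus of \autoref{yn-yn}/\autoref{BSDEshedongguji} are uniform in $n$. The only cosmetic differences are that you derive joint continuity of $\ul{u}^{\varepsilon}$ by passing to the limit in the $n$-independent H\"older bound rather than citing \autoref{21.10.5}, and you state the order of quantifiers (freeze $n_0$ first, then send $\varepsilon\to0$) more cleanly than the paper's iterated limit.
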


\begin{proof}
We will apply the well known contraction principle to prove the theorem (see \autoref{Contraction principle} or Theorem 2.4 in Varadhan \cite{ref21}). So we only need to show that  for $\varepsilon\in(0,1]$, $F^\varepsilon$ is continuous operator from $C([t,T],\IR^m)$ onto $C([t,T],\IR)$, and moreover, that $F^\varepsilon$ converges uniformly to $F^0$ over all compact sets of $C([t,T],\IR^m)$ as $\varepsilon$ vanishes.

\ms

{\sf Step 1 - Continuity of $F^\varepsilon$:} Let $\varepsilon > 0$ and $\phi \in C([t,T],\IR^m)$. We will prove that $F^\varepsilon$ is continuous at $\phi$. Let $\{\phi^n \}_n$ be a sequence in $C([t,T],\IR^m)$ which converges to $\phi$ under the uniform norm $\rho_{[t,T]}(\phi)$.
Then, there exists an constant $M >0$ such that for any $n \in \IN$,  $\rho_{[t,T]}(\phi^n) \les M$ and $\rho_{[t,T]}(\phi) \les M$. By the \autoref{21.10.5}, we known $F^\varepsilon(\phi) \deq \ul{u}^{\varepsilon}(\cdot,\phi_\cdot)$ is continuous, so $\ul{u}^{\varepsilon}$ is uniformly continuous on any compact subset of $[t,T] \times \mathcal{K}$, where $\mathcal{K}$ is the closed ball centered at the origin with
radius $M$ in $\IR^m$. Therefore, for any $\epsilon >0$, there exists $\delta > 0$ such that $|s-s'|<\delta$ and $|x-x'|<\delta,x,x' \in \mathcal{K}$ imply $|\ul{u}^{\varepsilon} (s,x) - \ul{u}^{\varepsilon} (s',x')| \les \epsilon$. Since there exists an $n_0$ such that for all
$n \ges n_0, \ \ \rho_{[t,T]}(\phi^n -\phi) \les \delta$, so for any $s \in [t,T]$ and $\phi_s,$ $\phi^{n}_s \in \mathcal{K}$  , we have $|\ul{u}^{\varepsilon} \left(s,\phi^{n}_s\right) - \ul{u}^{\varepsilon} \left(s,\phi_s\right)| \les \epsilon$, this imply that $F^\varepsilon(\phi^n)$ converges to $F^\varepsilon(\phi)$ as $n$ tend to $\infty$.

\ms

{\sf Step 2 - Convergence over compact subsets:} To prove that $F^\varepsilon$ uniformly converges to $F^0$  on every compact subset of $C([t,T],\IR^m)$. Let  $\cK^0$ be a compact subset of $C([t,T],\IR^m)$, and $\phi \in \cK^0$. We have
\begin{align}
&
\sup_{\phi\in\cK^0}\left[
\rho_{[t,T]}\big(F^{\varepsilon}(\phi)-F^{0}(\phi)\big)\right]
 \nonumber \\
&\quad =\sup_{\phi\in \cK^0} \sup_{s\in[t,T]}
|\ul{u}^{\varepsilon}(s,\phi_s)-\ul{u}^{0}(s,\phi_s)|
=\sup_{\phi\in \cK^0} \sup_{s\in[t,T]}
|\ul{Y}^{\varepsilon,s,\phi_s}_s-\psi^{s,\phi_s}_s|
 \nonumber \\
&
\quad
\les \sup_{x\in \cJ} \sup_{s\in[t,T]}
|\ul{Y}^{\varepsilon,s,x}_s-\psi^{s,x}_s|   \nonumber  \\
& \quad \les \sup_{x\in \cJ} \sup_{s\in[t,T]}
\left(|\ul{Y}^{\varepsilon,s,x}_s-Y^{n,\varepsilon,s,x}_s| + |Y^{n,\varepsilon,s,x}_s - \psi^{n,s,x}_s| + |\psi^{n,s,x}_s-\psi^{s,x}_s| \right),
\nonumber
\end{align}
where we set $\cJ \deq \{\phi_s: \phi\in\cK^0,\ s\in[t,T]\}$, as the set of points consisting of the images of the $\phi \in \cK^0$ in $\IR^m$ for all $s\in[t,T]$. Since $\phi$ is continuous, $\cJ$ is a compact set of $\IR^m$.

\ms

Then, by  \autoref{PDEshedongzuixiaojielianxu}, we have
\begin{equation}\label{l1}
  \sup_{\varepsilon \in [0,1]}\sup_{x\in \cJ} \sup_{s\in[t,T]} |\ul{Y}^{\varepsilon,s,x}_s-Y^{n,\varepsilon,s,x}_s|=\sup_{\varepsilon \in [0,1]}\sup_{x\in \cJ} \sup_{s\in[t,T]}|\ul{u}^\varepsilon(s,x)-u^{n,\varepsilon}(s,x)|
\end{equation}
going to 0, as $n$ tends to infinitely. By  \autoref{BSDEshedongguji}, there exists a constant $C>0$ which is independent of $s,$ $x,$ $\varepsilon$ and $n $, such that
\begin{equation}\label{l2}
\sup_{x\in \cJ} \sup_{s\in[t,T]} |Y^{n,\varepsilon,s,x}_s - \psi^{n,s,x}_s| \les C \varepsilon .
\end{equation}
From  \autoref{PDEshedongzuixiaojielianxu}, we also have that
\begin{equation}\label{l3}
\sup_{x\in \cJ} \sup_{s\in[t,T]} |\psi^{n,s,x}_s-\psi^{s,x}_s|=\sup_{x\in \cJ} \sup_{s\in[t,T]}|u^{n,0}(s,x)-\ul{u}^0(s,x)|
\end{equation}
going to 0, as $n$ tends to infinitely.

\ms

Combining \eqref{l1}, \eqref{l2} and \eqref{l3}, we can get
\begin{equation*}
   \lim_{\varepsilon \rightarrow 0}\lim_{n \rightarrow \infty}\sup_{\phi\in\cK^0}\left[ \rho_{[t,T]}\big(F^{\varepsilon}(\phi)-F^{0}(\phi)\big)\right]=0.
\end{equation*}
So we can get $F^\varepsilon$ uniformly converges to $F^0$  on every compact subset of $C([t,T],\IR^m)$.

\ms

The result now follows from the above mentioned contraction principle.
\end{proof}
\begin{remark}\label{3}
In fact, the rate function $\widehat{I}_x$ in  \autoref{BSDELDP} can be given following:
$$\widehat{I}_x(\psi)= \inf_{v \in \cA } \frac{1}{2}\int_t^T |\dot{v}_r|^2 dr $$
where
\begin{align*}
\cA &= \big\{ v: v \in H([t,T];\IR^d) \text{ such that } \psi_s =\psi^{t,x}_s = g(\varphi^{t,x}_T) + \int_s^T f\left( u, \psi^{t,x}_u ,0 \right) du, \\
& \hspace{1.4cm} \varphi^{t,x}_s =x+\int_t^s b(r,\varphi^{t,x}_r)dr + \int_t^s \sigma(r) \dot{v}_r dr,  \q~ 0 \les t \les s \les T \big\},
\end{align*}
with the convention that $\inf \varnothing =+\infty$.
%whenever $ \cA \neq \varnothing$ and $\widehat{I}_x(\psi) =+\infty$ otherwise.
\end{remark}

\ms

%In our future works, we will study the LDP for BSDE with the generator $f$ having the weaker condition on spatial variables $y$ and $z$, in this point of view, the smooth approximation in \autoref{smooth} maybe help us to overcome some difficulties.

%In our future works, we will study the LDP for BSDE whose generator $f$ is superquadratic on spatial variable $z$, in this point of view, the smooth approximation in \autoref{smooth} maybe help us to overcome some difficulties.

For the LDP for BSDE with the superquadratic generator, in our coming paper, we will show that our approach in this paper is flexible enough to be able to treat the superquadratic BSDE case without major modifications to our proofs (For the sup-convolution approximation technicalities, see Shi and Yang \cite{ref202} for detail).

\end{document}